\newtheorem{theorem}{Theorem}[section]
\newtheorem{lemma}[theorem]{Lemma}
\newtheorem{proposition}[theorem]{Proposition}
\newtheorem{condition}[theorem]{Condition}
\newtheorem{corollary}[theorem]{Corollary}
\theoremstyle{definition}     
\newtheorem{definition}[theorem]{Definition}
\newtheorem{example}[theorem]{Example}
\numberwithin{equation}{section}
\def \hd #1 {\bfseries #1  \mdseries}
\def \italic #1 {\bfseries \it #1 \rm \mdseries}
\def \ra {\rightarrow}
\def \cen #1 { \begin{center} #1 \end{center}}
\def \mbz {\mathbb Z}
\def \mbc {\mathbb C}
\def \mbp {\mathbb P}
\def \mbq  {\mathbb {Q}}
\def \mco  {\mathcal {O}}
\def \mcX  {\mathcal {X}}
\def \mcD  {\mathcal {D}}
\def \Pic {{\rm{Pic}}}
\def \rk {{\rm{rk}}}
\def \im {{\rm{im}}}
\def \Gr {{\rm{Gr}}}
\def \rank {{\rm{rank}}}
\def \NE {{\rm{NE}}}
\begin{document}
\title
{$d$-semistable Calabi--Yau threefolds of Type III }

\author{Nam-Hoon Lee}
\address{
Department of Mathematics Education, Hongik University
42-1, Sangsu-Dong, Mapo-Gu, Seoul 121-791, Korea
}
\email{nhlee@hongik.ac.kr}
\address{School of Mathematics, Korea Institute for Advanced Study, Dongdaemun-gu, Seoul 130-722, South Korea }
\email{nhlee@kias.re.kr}
\subjclass[2010]{14J32, 14D05, 32G20}
\begin{abstract}
We develop some methods to construct normal crossing varieties whose dual complexes are two-dimensional, which are smoothable to Calabi--Yau threefolds. We calculate topological invariants of smoothed Calabi--Yau threefolds and show that several of them are new examples.
\end{abstract}
\maketitle
\section{Introduction}
A Calabi-–Yau manifold is a compact K\"ahler manifold with trivial canonical class
such that the intermediate cohomologies of its structure sheaf are all trivial
($h^i (M,\mco_M ) = 0$ for $0 < i < \dim(M)$).
Calabi--Yau threefolds have attracted much interest from both  of mathematics and physics but the classification  of Calabi--Yau threefolds is widely open. Even boundedness of their Hodge numbers is still unknown.
Thus developing method of constructing Calabi--Yau threefolds and finding new examples are
  of interest.
   If a normal crossing variety  is the central fiber of a semistable degeneration of a Calabi--Yau manifolds,  it can be regarded as a member in a deformation family of those Calabi--Yau manifolds.
 Semistable degenerations of $K3$ surfaces, which are Calabi--Yau twofolds, have been investigated by several authors (\cite{Ku, PePi}).
 V.\ Kulikov proved that any degeneration of $K3$ surfaces can be
  modified to be semistable one whose total space has trivial canonical divisor and he also classified the central fibers into three types.
 R.\ Friedman proved a smoothing theorem for
a $d$-semistable normal crossing variety to $K3$ surfaces (\cite{Fr}), which is a converse of V.\ Kulikov's result. More concretely he found a sufficient condition for a normal crossing variety to be a central fiber of semistable degeneration of $K3$ surfaces.
A smoothing theorem for the higher dimensional case has been introduced by Y.\ Kawamata and Y.\ Namikawa in \cite{KaNa}.
A remarkable difference between two-dimensional cases of $K3$ surfaces and higher dimensional cases is that there are multiple deformation types for higher dimensional Calabi--Yau manifolds. So building a  normal crossing variety smoothable to a Calabi--Yau manifold can be regarded as building a deformation type of Calabi--Yau manifolds.

Smoothing of normal crossing variety with two components to Calabi--Yau threefolds   has been actively investigated.
A.\ Tyurin studied Calabi--Yau threefolds  via degeneration to normal crossing varieties, in particular 2-component varieties glued along a common anticanonical divisor in his posthumous paper \cite{Ty}.
R.\ Donagi, S.\ Katz and M.\ Wijnholt   considered  a holographic relation between F-theory on a degenerate Calabi-Yau and a dual theory on its boundary in \cite{DoShMa}.
A possible mirror construction of Calabi--Yau manifolds by smoothing has been suggested by C.\ Doran, A.\ Harder, A.\ Thompson (\cite{DoHaTh}) and materialized more in \cite{Lee1}.
In this case the dual complexes of the normal crossing varieties are one-dimensional (line segments).
In this note, we consider normal crossing varieties, smoothable to Calabi--Yau threefolds, whose dual complexes are two-dimensional.
There are some difficulties in making such normal crossing varieties that do not appear in the case of normal crossing varieties with only two components.
 We  explain this with examples and demonstrate how to circumvent the difficulties. We also develop a method for calculating their topological invariants such as Hodge numbers.
It turns out that several of them  have Hodge numbers different from Calabi--Yau threefolds from toric setting. It is also notable that the methods in this note naturally leads to the construction of multiple non-homeomorphic Calabi--Yau threefolds with the same Hodge numbers.

The structure of this note is as follows.

Section \ref{sec2} is a background section for smoothing and degeneration of Calabi--Yau manifolds.
We introduce basic definitions and  the smoothing theorem of Kawamata-Namikawa, which is a main tool of the construction of Calabi--Yau manifolds in this note.

In Section \ref{sec3}, we concentrate on the case of normal crossing varieties of three components. Some formulas for Hodge numbers of their smoothing are developed.

Section \ref{sec4} is devoted to the $d$-semitablility condition.
Considering a concrete example, we demonstrate the difficulty in making $d$-semistable normal crossing varieties when they have three components and how to  circumvent the difficulty.

In  Section \ref{sec5}, we generalize and systemize the procedure in Section \ref{sec4}, stating exact conditions for the construction of $d$-semistable normal crossing varieties.

Applying our methods, we give several examples of Calabi--Yau threefolds of type III in  Section \ref{sec6}. We consider six configurations which together produce more than fifty examples, including ones with new Hodge numbers. We also introduce a  different kind of   Calabi--Yau threefold of type III, which demonstrates that there are some room for generalization of  our method.

\section{Degeneration and smoothing of Calabi--Yau manifolds} \label{sec2}

A normal crossing
variety is a reduced complex analytic space which is locally isomorphic to a normal
crossing divisor on a smooth variety. It is said to be simple if all of its components are smooth varieties. In this note, we only consider simple ones.
Let $X$ be a normal crossing variety with irreducible components $\{X_i | i \in I \}$.
A stratum $S$ of $X$ is any irreducible component of an intersection $\bigcap_{i \in J} X_i$ for some $J \subset I$.
The dual complex $\mcD(X)$ of $X$ is a simplicial complex whose vertices are labeled by the irreducible components of $X$ and for every stratum of dimension $r$ we attach a ($\dim X -r$)-dimensional simplex.

A semistable degeneration is a proper flat holomorphic map $\varphi: \mathcal X \ra \Delta$ from a  K\"ahler manifold $\mathcal X$ onto the complex unit disk $\Delta$
such that the fiber $\mathcal X_t = \varphi^{-1}(t)$ is a smooth complex variety for $t \neq 0$ and the central fiber $\mathcal X_0$ is a simple normal crossing divisor of $\mathcal X$.
Following naming in \cite{Fr,Ku}, we give following definition:

\begin{definition}
A projective normal crossing variety of dimension $n$ is called a {$d$-semistable Calabi--Yau $n$-fold of type $k+1$} if it has trivial dualizing sheaf, it is the central fiber in a semistable degenerations of Calabi--Yau manifolds and its dual complex is $k$-dimensional.
\end{definition}
It seems natural to include some mild singularities for Calabi--Yau manifolds of dimension higher than two, but in this note we stick to only smooth ones.
In this definition, the usual Calabi--Yau $n$-fold  is  a $d$-semistable Calabi--Yau $n$-fold of type I.
We also say that $\mcX_0$ is smoothable to $\mcX_t$($t\neq 0$) with smooth total space $\mcX$.
In this note, we mainly consider   $d$-semistable Calabi--Yau threefolds of type III that are composed of three components.
We also briefly discuss  examples of Calabi--Yau threefolds of type III that have four or more components.
Our main tool is the smoothing theorem of Y. Kawamata and Y. Namikawa (\cite{KaNa}), which is stated below for the readers' convenience.
\begin{theorem}[Y. Kawamata, Y. Namikawa] \label{kana}Let $Y =
\bigcup_i Y_i$ be a compact normal crossing variety of dimension $n$ such that
\begin{enumerate}
\item It is  K\"ahler and $d$-semistable.
\item Its dualizing sheaf is trivial: $\omega_{Y} = \mco_{Y}$,
\item $H^{n-2}  (Y_i, \mco_{Y_i}) = 0$ for any $i$ and $H^{n-1}  (Y, \mco_{Y}) = 0$.
\end{enumerate}
Then $Y$ is smoothable to an  $n$-fold $M_Y$  with  trivial canonical class and the total
space of smoothing is smooth.
\end{theorem}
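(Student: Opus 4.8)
The plan is to realise $Y$ as the central fibre of a \emph{logarithmic} smoothing and then invoke a Bogomolov--Tian--Todorov type unobstructedness theorem, following the strategy of Kawamata and Namikawa. The first step is to exploit $d$-semistability to equip $Y$ with a logarithmic structure. Recall that $d$-semistability of a normal crossing variety $Y$ with double locus $D = \Sing(Y)$ means exactly that the infinitesimal normal sheaf $\mathcal{T}^1_Y = \mathcal{E}xt^1(\Omega^1_Y, \mco_Y)$ is isomorphic to $\mco_D$. I would use this isomorphism to glue the local semistable log structures on the components into a global log structure $\mathcal{M}_Y$, making the natural map $f\colon (Y, \mathcal{M}_Y) \to (\mathrm{pt}, \mbn)$ to the standard log point log smooth, integral and vertical. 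With this in place one has the locally free sheaf of relative log differentials $\Omega^{\bullet}_{Y/\mathrm{pt}}$ together with the log tangent sheaf $\Theta = (\Omega^1_{Y/\mathrm{pt}})^{\vee}$; the logarithmic deformation functor then has tangent space $H^1(Y, \Theta)$ and obstructions in $H^2(Y, \Theta)$.

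The heart of the argument, and the step I expect to be hardest, is to show that the logarithmic deformation functor is \emph{unobstructed}. I would establish this through the $T^1$-lifting criterion of Ran and Kawamata: it suffices to prove that for every deformation $Y_m$ over $\mbc[t]/(t^{m+1})$ the restriction map $T^1(Y_m) \to T^1(Y_{m-1})$ is surjective. This is the logarithmic analogue of the Bogomolov--Tian--Todorov theorem, and it rests on two Hodge-theoretic inputs: the degeneration at $E_1$ of the logarithmic Hodge-to-de Rham spectral sequence for $f$ (available because $Y$ is K\"ahler), and the existence of a nowhere-vanishing relative log $n$-form. The latter is supplied precisely by the hypothesis $\omega_Y = \mco_Y$, which trivialises $\Omega^n_{Y/\mathrm{pt}}$ and thereby identifies $\Theta$ with $\Omega^{\,n-1}_{Y/\mathrm{pt}}$; combined with the $E_1$-degeneration this forces the required lifting at every order. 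Carrying out these cohomological comparisons carefully over each Artinian base is where the real technical work lies.

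Granting unobstructedness, the logarithmic Kuranishi space is smooth, and it remains to exhibit a one-parameter log smooth family over the standard log disk. Its underlying analytic map is then automatically a semistable degeneration, since a log smooth morphism to the standard log disk carrying the semistable log structure looks \'etale-locally like $x_1 \cdots x_r = t$, hence has smooth total space and smooths the double locus. To produce such a family I would analyse the long exact cohomology sequence attached to
\[ 0 \longrightarrow \Theta_Y \longrightarrow \Theta \longrightarrow \mathcal{T}^1_Y \longrightarrow 0, \]
where $\Theta_Y = \mathcal{H}om(\Omega^1_Y, \mco_Y)$ is the ordinary tangent sheaf and $\mathcal{T}^1_Y \cong \mco_D$ by $d$-semistability. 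The vanishing hypotheses $H^{n-2}(Y_i, \mco_{Y_i}) = 0$ and $H^{n-1}(Y, \mco_Y) = 0$ of condition (3), fed through the Mayer--Vietoris-type sequences comparing $\mco_Y$ with the $\mco_{Y_i}$ and through the duality $\Theta \cong \Omega^{\,n-1}_{Y/\mathrm{pt}}$ coming from $\omega_Y = \mco_Y$, are exactly what is needed to show that the log Kodaira--Spencer map surjects onto the smoothing direction, so that an honest log smoothing $\varphi\colon \mathcal{X} \to \Delta$ exists.

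Finally, triviality of the canonical class of the general fibre $M_Y = \mathcal{X}_t$ follows by propagating $\omega_Y = \mco_Y$ across the family. Since $\varphi$ is log smooth with semistable log structure, the relative log dualising sheaf $\omega^{\log}_{\mathcal{X}/\Delta}$ coincides with $\omega_{\mathcal{X}/\Delta}$ away from the central fibre and is trivial in a neighbourhood of $\mathcal{X}_0$, because it restricts to $\omega_Y \cong \mco_Y$ there; restricting to a smooth nearby fibre gives $\omega_{M_Y} \cong \mco_{M_Y}$, as required. This completes the plan, with the unobstructedness step of the second paragraph being the decisive obstacle.
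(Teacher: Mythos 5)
This statement is not proved in the paper: it is quoted verbatim from Kawamata--Namikawa \cite{KaNa} as an external tool, so there is no in-paper argument to compare yours against. Your outline is a faithful reconstruction of the strategy of the original source --- using $d$-semistability to endow $Y$ with a log structure over the standard log point, proving unobstructedness of log deformations by the $T^1$-lifting criterion together with $E_1$-degeneration of the log Hodge--de Rham spectral sequence and the trivialization $\Theta \cong \Omega^{n-1}_{Y/\mathrm{pt}}$ coming from $\omega_Y = \mco_Y$, and then using the vanishings in condition (3) to show the log Kodaira--Spencer map hits a class that is nowhere zero on $H^0(\mco_D)$, so the resulting log smooth family is an honest semistable smoothing. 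Be aware, though, that as written this is a roadmap rather than a proof: the two decisive steps (the order-by-order $T^1$-lifting argument and the surjectivity onto the smoothing direction, where conditions (3) enter via Serre duality on the $Y_i$) are only located, not executed, and these occupy the bulk of the Kawamata--Namikawa paper. For the purposes of the present note you would simply cite \cite{KaNa}, as the author does.
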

It was showed that the Hodge numbers of the Calabi--Yau manifolds can be calculated from the geometry of the normal crossing varieties (\cite{Leeth,Lee}).

Let us take a very simple example.
\begin{example}\label{kanaex}
Let $W_1$, $W_2$ be copies of $\mbp^3$ and $D$ be a smooth quartic surface in $\mbp^3$.
Then $W_1$, $W_2$ contain copies of $D$.
Let $W = W_1 \cup_D W_2$, where `$\cup_D$' means gluing along $D$.
Then the normal crossing variety $W$ is projective and has trivial dualizing sheaf but it is not $d$-semistable.
In \cite{KaNa},  $W_2$ is blowed up along a smooth curve $c$ in the linear system $|\mco_D(8)|$ to become $\widetilde W_2$. The proper transform $\widetilde D$  in $\widetilde W_2$ of $D$ is isomorphic to $D$. So one can paste $W_1$ and $\widetilde W_2$ along $D$ and $\widetilde D$ to get a $d$-semistable normal crossing variety $\widetilde W$.
 It is smoothable to a Calabi--Yau threefold whose invariants are calculated in \cite{Lee}.
Note that  $\widetilde W$ is a $d$-semistable Calabi--Yau threefold of type II.
\end{example}

\section{The case of type III} \label{sec3}

Consider a normal crossing variety $Y=Y_1 \cup Y_2 \cup Y_3$. Let $Y_{ij} = Y_i \cap Y_j, Y_{ijk}=Y_i \cap Y_j \cap Y_k$.
 If we treat the double locus  $Y_{ij}$ and the triple locus $Y_{ijk}$ as subvarieties or divisors of  $Y_j$ and $Y_{jk}$, we denote them by $Y_{(ij)}$ and $Y_{(ijk)}$ respectively. Let $D_1 = Y_{23}, D_2 = Y_{31}, D_3 = Y_{12}$ and $\tau=Y_{123}$.
We want to smooth $Y$ to a Calabi--Yau manifold, using Theorem \ref{kana}.
Throughout this note, we assume  that $Y$ satisfies the following conditions, which contains all conditions except the $d$-semistability in Theorem \ref{kana}.

\begin{condition} \label{refcond}
\begin{enumerate}
\item $n=\dim Y \ge 3$
\item $H^{a}  (Y_i, \mco_{Y_i}) = 0$ and $H^{a}  (D_{i}, \mco_{D_i}) = 0$
for  each $i=1,2,3$ and $a=1,2$. $D_i$'s and $\tau$ are all connected.
\item There is an ample divisor $H_i$ of $Y_i$ such that $H_i|_{Y_{ij}} \sim H_j|_{Y_{ij}}$ for every $i, j$.
\item For each fixed $j$, $-\sum_{i \neq j} Y_{(ij)}$ is a canonical divisor of $Y_j$.
\end{enumerate}
\end{condition}

Note that the pair $(Y_i, Y_{(ji)}\cup Y_{(ki)})$ is a log Calabi--Yau pair for $\{i,j,k \}= \{1,2,3 \}$ and $Y_{(ji)}\cup Y_{(ki)}$ is a normal crossing of two rational surfaces if $\dim Y = 3$.

Now assume that $Y$  is smoothable to a Calabi--Yau manifold $M_Y$ with smooth total space and let us calculate topological invariants of $M_Y$.
In \S 7 of \cite{Lee}, we have introduced $G^{2i}(Y, \mbz)$ as a subgroup of
$\bigoplus_\alpha H^{2i}(Y_\alpha, \mbz)$, that is, the image of the map
$$H^{2i}(Y, \mbz) \ra \bigoplus_\alpha H^{2i}(Y_\alpha, \mbz).$$
We also located the Chern class of $M_Y$  as an element of $\bigoplus_{i}G^{2i}(Y, \mbz)$.
The group $G^{2i}(Y, \mbz)$ inherits the cup product from those of   $\bigoplus_\alpha H^{2i}(Y_\alpha, \mbz)$ with the mixed terms set to be zero.

It is (Definition  7.1 in \cite{Lee}):
\begin{align}\label{chern}
c(Y) = \sum_i \left( 1^{(i)} - \sum_{j (\neq i )} Y_{(j i)} \right ) c(Y_i),
\end{align}
where $1^{(i)}$ is the generator of $H^0(Y_i, \mbz)$.
The cup product of  $c(Y)$ with $\bigoplus_{i}G^{2i}(Y, \mbz)$ gives some information about
the cup product of  $c(M_Y)$ with $\bigoplus_{i}H^{2i}(M_Y, \mbz)$.
Using this formula, one can calculate the topological Euler characteristic of $M_Y$.
\begin{proposition} \label{eulereqn} If $Y$ is smoothable to a Calabi--Yau manifold $M_Y$ with smooth total space, then
$$e(M_{Y}) = e(Y_1) + e(Y_2) + e(Y_3) - 2 \left(e(D_1) + e(D_2) + e(D_3)\right) + 3e(\tau).$$
\end{proposition}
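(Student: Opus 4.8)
The plan is to read off the Euler number as the integral of the top Chern class, using the formula \eqref{chern}. Since $c(M_Y)$ is recorded inside $\bigoplus_i G^{2i}(Y,\mbz)$ by \eqref{chern} (the machinery of \cite{Lee}), the Euler number $e(M_Y)=\int_{M_Y}c_3(M_Y)$ is obtained by summing the degree-three (i.e.\ $H^6$) part of $c(Y)$ over the components $Y_i$. Writing $D^{(i)}=\sum_{j\neq i}Y_{(ji)}$ for the total double locus on $Y_i$, the factor $1^{(i)}-D^{(i)}$ has only degree $0$ and $1$ parts, so
\[
\bigl[c(Y)\bigr]_i^{(6)}=c_3(Y_i)-D^{(i)}\,c_2(Y_i),
\]
and hence $e(M_Y)=\sum_i e(Y_i)-\sum_i\int_{Y_i}D^{(i)}\,c_2(Y_i)$. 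The first sum already produces the leading $e(Y_1)+e(Y_2)+e(Y_3)$, and everything else comes from the correction integrals.

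Next I would push each correction integral down to the double surfaces. Each $D^{(i)}$ is a sum of two smooth double surfaces $A\subset Y_i$, and for such a divisor the normal bundle sequence gives $c_2(Y_i)|_A=c_2(A)+c_1(A)\,c_1(N_{A/Y_i})$, so that $\int_A c_2(Y_i)|_A=e(A)+\int_A c_1(A)\,c_1(N_{A/Y_i})$. Summing over all six incidences (each of $D_1,D_2,D_3$ sits on exactly two components), the $e(A)$ terms contribute $2\bigl(e(D_1)+e(D_2)+e(D_3)\bigr)$, and we are left with
\[
e(M_Y)=\sum_i e(Y_i)-2\sum_k e(D_k)-\Sigma,\qquad \Sigma=\sum_i\sum_{A\subset Y_i}\int_A c_1(A)\,c_1(N_{A/Y_i}).
\]

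The \emph{main obstacle} is to evaluate the normal bundle sum and show $\Sigma=-3e(\tau)$; this is where the geometry of the smoothing must be used. The key input is that the total space $\mcX$ is smooth and $\mcX_0=Y_1+Y_2+Y_3$ is the principal divisor $\varphi^*(0)$, so $\mco_{\mcX}(Y_1+Y_2+Y_3)\cong\mco_{\mcX}$. Restricting this isomorphism to a double surface $D_k=Y_{ij}$ and using $\mco_{\mcX}(Y_i)|_{D_k}=N_{D_k/Y_j}$, $\mco_{\mcX}(Y_j)|_{D_k}=N_{D_k/Y_i}$, and $\mco_{\mcX}(Y_\ell)|_{D_k}=\mco_{D_k}(\tau)$ for the remaining component $Y_\ell$ (which meets $D_k$ along $\tau$), I obtain the triple-point relation $c_1(N_{D_k/Y_i})+c_1(N_{D_k/Y_j})=-[\tau]$ on $D_k$. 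Grouping $\Sigma$ by double surface then gives $\Sigma=-\sum_k\int_{D_k}c_1(D_k)\,[\tau]=-\sum_k\deg\bigl(T_{D_k}|_\tau\bigr)$, and adjunction for $\tau\subset D_k$ yields $\deg\bigl(T_{D_k}|_\tau\bigr)=e(\tau)+\deg N_{\tau/D_k}$.

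Finally I would kill the self-intersection terms by the same triviality: restricting $\mco_{\mcX}(\sum_i Y_i)\cong\mco_{\mcX}$ all the way down to $\tau$ and noting $N_{\tau/D_k}=\mco_{\mcX}(Y_\ell)|_\tau$ (with $Y_\ell$ the component opposite $D_k$), I get $\sum_k\deg N_{\tau/D_k}=\deg\mco_{\mcX}(\sum_i Y_i)|_\tau=0$. Hence $\Sigma=-3e(\tau)$, which gives the claimed identity. I expect the bookkeeping of the normal bundles at the three double surfaces and along $\tau$ to be the delicate part. As an independent check one can bypass the Chern-class calculation entirely and compute $e(M_Y)$ from the Clemens retraction $M_Y\to Y$, whose fibres over the open strata where $p+1$ components meet are real $p$-tori: only the one-component strata $Y_i^\circ$ have nonzero fibre Euler number, so $e(M_Y)=\sum_i e(Y_i^\circ)$, and inclusion--exclusion for $Y_i^\circ=Y_i\setminus(Y_{(ji)}\cup Y_{(ki)})$ reproduces the same formula.
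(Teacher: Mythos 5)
Your proposal is correct and follows essentially the same route as the paper: expand $e(M_Y)=c_3(Y)$ via the formula \eqref{chern}, restrict $c_2(Y_i)$ to the double surfaces by adjunction, invoke the triple point formula $c_1(N_{D_k/Y_i})+c_1(N_{D_k/Y_j})=-[\tau]$ coming from the triviality of $\mco_{\mcX}(\mcX_0)$, and kill the residual terms $\sum_k\deg N_{\tau/D_k}=\sum_k Y_{(kij)}^2$ by restricting that same triviality to $\tau$. The only difference is bookkeeping (you group by double surface and push down to $\tau$, the paper groups by ordered pairs $(i,j)$), and your closing remark about the Clemens retraction is a nice independent check but not part of the paper's argument.
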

\begin{proof}
From equation \ref{chern},
$$e(M_{Y}) = c_3 (Y) =  \sum_i c_3(Y_i) - \sum_{i\neq j}  Y_{(j i)} \cdot c_2 (Y_i).$$
By the adjunction formula, for $\{i,j,k\} = \{1,2,3\}$,
$$Y_{(j i)} \cdot c_2 (Y_i)  = c_2(Y_{j i}) + Y_{(j i)}|_{Y_{ji}} \cdot c_1(Y_{ji}).$$
Since $Y$ is $d$-semistable, $N(Y_{ij})=0$, i.e.\  $Y_{(j i)}|_{Y_{ji}} + Y_{(ij)}|_{Y_{ij}} = - Y_{(kij)}$.
So we have
\begin{align*}
Y_{(j i)} \cdot c_2 (Y_i) + Y_{(i j)} \cdot c_2 (Y_j) &= \left(c_2(Y_{j i}) + Y_{(j i)}|_{Y_{ji}}\cdot c_1(Y_{ji}) \right ) +  \left(c_2(Y_{i j}) + Y_{(ij)}|_{Y_{ij}} \cdot c_1(Y_{ij}) \right )\\
&=c_2(Y_{j i}) + c_2(Y_{i j})  + \left ( Y_{(j i)}|_{Y_{ji}} + Y_{(ij)}|_{Y_{ij}} \right ) \cdot c_1(Y_{ij})\\
&=e(Y_{j i}) + e(Y_{i j})  + (-Y_{123}) \cdot c_1(Y_{ij})\\
&= 2e(Y_{ij}) - Y_{(kij)}^2 - c_1(Y_{kij}) \\
&=  2e(D_k) - Y_{(kij)}^2 - e(\tau).
\end{align*}
Hence
\begin{align*}
e(M_{Y}) &=  \sum_i c_3(Y_i) - \sum_{i\neq j}  Y_{(j i)} \cdot c_2 (Y_i)\\
         &=\sum_i e (Y_i) -  \sum_{i< j}  ( Y_{(j i)} \cdot c_2 (Y_i) + Y_{(i j)} \cdot c_2 (Y_j))\\
         &=\sum_i e (Y_i) - 2\sum_k e(D_k) +3 e(\tau) -(Y_{(312)}^2  + Y_{(123)}^2 + Y_{(213)}^2).
\end{align*}
Note
\begin{align*}
 0=N(Y_{12})|_{Y_{123}} + N(Y_{23})|_{Y_{123}} + N(Y_{13})|_{Y_{123}} &= 3 (Y_{(312)}^2  + Y_{(123)}^2 + Y_{(213)}^2).
\end{align*}
Therefore we have the formula.
\end{proof}

Next, we determine the Hodge number $h^{1,1}(M_Y)$.
\begin{proposition} \label{prop11}
There is  an exact sequence
$$0 \ra H^2(Y, \mbz) \stackrel{\eta}{\ra} H^2(Y_1, \mbz) \oplus  H^2(Y_2, \mbz) \oplus  H^2(Y_3, \mbz)  \stackrel{\mu}{\ra} H^2(D_1, \mbz) \oplus H^2(D_2, \mbz) \oplus H^2(D_3, \mbz),$$
where $\eta$ is the restriction map and the map $\mu$ is defined by
$$\mu(H_1, H_2, H_3) = (H_2|_{D_1}-H_3|_{D_1}, H_3|_{D_2}-H_1|_{D_2}, H_1|_{D_3}-H_2|_{D_3}).$$
\end{proposition}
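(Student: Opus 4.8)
The plan is to use the Mayer--Vietoris spectral sequence attached to the closed cover $\{Y_1,Y_2,Y_3\}$ of $Y$, i.e. the hypercohomology spectral sequence of the combinatorial resolution
\[
0 \to \mbz_Y \to \bigoplus_{i} \mbz_{Y_i} \to \bigoplus_{i<j} \mbz_{Y_{ij}} \to \mbz_{Y_{123}} \to 0
\]
of the constant sheaf by (pushforwards of) constant sheaves on the strata, with the \v{C}ech-type alternating restriction differentials; this complex is exact because at each point its stalk is the reduced cochain complex of a simplex. The resulting spectral sequence is
\[
E_1^{p,q} = \bigoplus_{|S|=p+1} H^q(Y_S, \mbz) \Rightarrow H^{p+q}(Y, \mbz),
\]
and since $Y$ has only three components there is nothing beyond $\tau=Y_{123}$, so $E_1$ is supported in the three columns $p=0,1,2$. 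The map $\mu$ in the statement equals, summand by summand, the differential $d_1^{0,2}\colon \bigoplus_i H^2(Y_i)\to\bigoplus_{i<j}H^2(Y_{ij})$ up to a sign on each of the three factors (recall $D_1=Y_{23}$, $D_2=Y_{31}$, $D_3=Y_{12}$), so $\ker\mu=\ker d_1^{0,2}=E_2^{0,2}$; and $\eta$ is the edge homomorphism $H^2(Y)\to E_1^{0,2}$. Thus the assertion is exactly that $\eta$ is injective with image $E_2^{0,2}$.

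First I would harvest the vanishing from Condition \ref{refcond}(2). Since $H^1(Y_i,\mco_{Y_i})=0$ and $H^1(D_i,\mco_{D_i})=0$, the Hodge decomposition forces $b_1=0$ for each $Y_i$ and each $D_i$; as $H^1$ of any space is torsion free, this gives $H^1(Y_i,\mbz)=0$ and $H^1(D_i,\mbz)=0$. Hence the row $q=1$ vanishes in columns $p=0,1$: $E_1^{0,1}=E_1^{1,1}=0$. Next, all strata $Y_{ij}$ and $\tau$ being connected, the $q=0$ part of $d_1$ is the surjection $\mbz^3\to\mbz$, $(a_{12},a_{13},a_{23})\mapsto a_{12}-a_{13}+a_{23}$, so $E_2^{2,0}=0$. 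Reading off the weight filtration on $H^2(Y,\mbz)$, its two lower graded pieces are subquotients of $E_1^{1,1}=0$ and $E_2^{2,0}=0$; therefore $F^1H^2(Y,\mbz)=0$, the edge map $\eta$ is injective, and the sequence is exact at $H^2(Y,\mbz)$.

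It remains to prove exactness in the middle, i.e. $\operatorname{im}\eta=E_\infty^{0,2}=E_2^{0,2}=\ker\mu$, which amounts to degeneration at $E_2$ in this spot. The only possibly nonzero higher differential out of $E_2^{0,2}$ is $d_2^{0,2}\colon E_2^{0,2}\to E_2^{2,1}=H^1(\tau,\mbz)$ (the $d_3$ would land in $E^{3,0}=0$, and no differential enters $E_r^{0,2}$), so everything reduces to $d_2^{0,2}=0$. This is the crux. Over $\mbq$ it is Deligne's degeneration theorem: the differentials of this weight/Mayer--Vietoris spectral sequence are morphisms of pure Hodge structures, and $d_r$ sends a weight-$q$ structure to a weight-$(q-r+1)$ one, so for $r\ge 2$ it is a morphism between pure Hodge structures of distinct weights and must vanish; in particular $d_2^{0,2}\otimes\mbq=0$. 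To upgrade this to $\mbz$-coefficients I would invoke that the target $H^1(\tau,\mbz)$ is torsion free, whence a homomorphism that is zero after $\otimes\mbq$ is identically zero. Thus $d_2^{0,2}=0$ integrally, $E_\infty^{0,2}=E_2^{0,2}$, and $\operatorname{im}\eta=\ker\mu$.

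The main obstacle is precisely this last integral degeneration: the Hodge-theoretic input gives $d_2=0$ only rationally, and the passage to $\mbz$ rests on the torsion-freeness of $H^1(\tau,\mbz)$ together with the fact that $\tau$ occurs only as the top stratum, so the sole obstruction lands in $H^1(\tau)$. As a consistency check one can instead run the ordinary Mayer--Vietoris sequence for $Y=Y_1\cup(Y_2\cup Y_3)$, whose overlap is $Y_{12}\cup Y_{13}=D_3\cup D_2$ meeting along $\tau$; the same vanishing inputs reproduce the sequence, though the spectral-sequence bookkeeping above is cleaner.
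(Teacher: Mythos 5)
Your proof is correct, but it takes a genuinely different route from the paper's. The paper's own argument is very short: using the vanishing of $H^{1}$ and $H^{2}$ of the structure sheaves in Condition \ref{refcond}(2) and the exponential sequence, it identifies $H^2(Y,\mbz)$, $H^2(Y_i,\mbz)$ and $H^2(D_i,\mbz)$ with $\Pic(Y)$, $\Pic(Y_i)$ and $\Pic(D_i)$, and then quotes Proposition 2.6 of \cite{Fu} for the exactness of the resulting Picard-group sequence. In that framework the obstruction that you isolate as $d_2^{0,2}\colon E_2^{0,2}\to H^1(\tau,\mbz)$ appears instead as the failure of the cocycle condition for line-bundle gluing data over the connected triple curve $\tau$; it lives in $H^0(\tau,\mco^*_\tau)\simeq\mbc^*$ and is removed by rescaling one gluing isomorphism, which is why the Picard-group statement is comparatively elementary. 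Your argument stays in integral cohomology throughout: the Mayer--Vietoris (weight) spectral sequence of the closed cover, the vanishing of $H^1(Y_i,\mbz)$ and $H^1(Y_{ij},\mbz)$ forced by $b_1=0$, the acyclicity of the $q=0$ row (the cochain complex of the $2$-simplex), and Deligne's $E_2$-degeneration over $\mbq$ upgraded to $\mbz$ via torsion-freeness of $H^1(\tau,\mbz)$. This is heavier machinery, but it is self-contained apart from the degeneration theorem, it uses only the $H^1$ vanishings (not $H^2(Y_i,\mco_{Y_i})=H^2(D_i,\mco_{D_i})=0$, which the paper needs to pass to Picard groups), and it makes explicit exactly where the argument could fail for a general closed cover, namely the differential into $H^1(\tau,\mbz)$. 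Both proofs are valid; the paper's is shorter because the hard step is outsourced to \cite{Fu}.
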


\begin{proof}
Using the exponential sequence, there are isomorphisms
$$\Pic(Y) \simeq H^2(Y, \mbz), \Pic(Y_i) \simeq H^2(Y_i, \mbz), \Pic(D_i) \simeq H^2(D_i, \mbz).$$
Hence it is enough to show that the following sequence is exact:
$$0 \ra \Pic(Y){\ra} \Pic(Y_1) \oplus  \Pic(Y_2) \oplus  \Pic(Y_3) {\ra} \Pic(D_1) \oplus\Pic(D_2) \oplus\Pic(D_3).$$
The exactness of the above sequence comes from Proposition 2.6 of \cite{Fu}.

\end{proof}
 The subgroup $G^2(Y, \mbz)$ of
$$ H^2(Y_1, \mbz) \oplus  H^2(Y_2, \mbz) \oplus  H^2(Y_3, \mbz)$$
is defined by
$G^2(Y, \mbz) = \im (\eta)$ (p.\ 704 of \cite{Lee}) and now $G^2(Y, \mbz) = \ker \mu$ by Proposition \ref{prop11}.
It inherits the cup products from those of $ H^2(Y_1, \mbz) \oplus  H^2(Y_2, \mbz) \oplus  H^2(Y_3, \mbz)$, where the mixed terms are defined to be zero.
Let $NG^2(Y) = \langle e_1, e_2 \rangle$ be a subgroup of $ H^2(Y_1, \mbz) \oplus  H^2(Y_2, \mbz) \oplus  H^2(Y_3, \mbz)$,  where
$$e_1 = (-Y_{(21)}-Y_{(31)}, Y_{(12)}, Y_{(13)}), e_2 = ( Y_{(21)}, -Y_{(12)}-Y_{(32)}, Y_{(23)}).$$
Then it was showed in \S 5 of \cite{Lee} that $NG^2(Y)$ is a subgroup of $G^2(Y)$ that is degenerated with respect to the cup product and $\rk(NG^2(Y, \mbz))=2$.
Furthermore, it is known that there is an injection (Proposition 5.4 in \cite{Lee}):
\begin{align}\label{gmap}
\left ( G^2(Y, \mbz)/ NG^2(Y, \mbz) \right )_f \ra H^2(M_Y, \mbz)_f
\end{align}
with finite index, where $A_f$ for an Abelian group $A$ means its quotient by torsion part.
This injection preserves the cup product, so one can use it to calculate the cup product on $ H^2(M_Y, \mbz)$ (See \cite{Lee} and \S 6 of \cite{Lee1} for more details).
Since  $G^2(Y, \mbz) = \ker \mu$,  we can easily calculate $G^2(Y, \mbz)$.

\begin{corollary}\label{hodge} If $Y$ is smoothable to a Calabi--Yau  manifold $M_Y$ with smooth total space, then
\begin{align*}
h^{1,1} ( M_Y ) =\dim ( \ker   (H^2(Y_1, \mbz) &\oplus  H^2(Y_2, \mbz) \oplus  H^2(Y_3, \mbz)\\
 & \stackrel{\mu}{\ra} H^2(Y_{12}, \mbz) \oplus H^2(Y_{23}, \mbz) \oplus H^2(Y_{31}, \mbz) )  )-2.
\end{align*}
\end{corollary}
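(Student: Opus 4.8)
The plan is to reduce the computation of $h^{1,1}(M_Y)$ to a rank count among the groups already introduced, and then to read off that rank from Proposition \ref{prop11}. First I would observe that $M_Y$ is a Calabi--Yau threefold, so $H^2(M_Y, \mco_{M_Y}) = 0$; hence $h^{2,0}(M_Y) = h^{0,2}(M_Y) = 0$ and the Hodge decomposition of $H^2(M_Y, \mbc)$ collapses to give
$$h^{1,1}(M_Y) = b_2(M_Y) = \rk H^2(M_Y, \mbz).$$
This step converts a Hodge-theoretic quantity into a purely topological rank, which is exactly what the machinery of \cite{Lee} controls.

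Next I would invoke the injection \eqref{gmap}, namely $\left( G^2(Y, \mbz)/ NG^2(Y, \mbz) \right )_f \ra H^2(M_Y, \mbz)_f$, which has finite index. Since a finite-index inclusion of finitely generated abelian groups preserves rank, this yields $\rk H^2(M_Y, \mbz) = \rk\left( G^2(Y, \mbz)/NG^2(Y, \mbz) \right)$. Because $NG^2(Y)$ is a subgroup of $G^2(Y, \mbz)$ of rank $2$ (established in \S 5 of \cite{Lee}), the quotient has rank $\rk G^2(Y, \mbz) - 2$, so that $h^{1,1}(M_Y) = \rk G^2(Y, \mbz) - 2$.

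It then remains to identify $\rk G^2(Y, \mbz)$ with the dimension of the kernel appearing in the statement. By Proposition \ref{prop11} we have $G^2(Y, \mbz) = \im(\eta) = \ker \mu$, where $\mu$ is precisely the map of Corollary \ref{hodge} once one matches $D_1 = Y_{23}$, $D_2 = Y_{31}$, $D_3 = Y_{12}$ and reorders the target accordingly. As $\ker \mu$ is free abelian, its rank equals the dimension of the kernel of the induced map over $\mbc$, i.e.\ $\rk G^2(Y, \mbz) = \dim(\ker \mu)$. Combining the three steps yields the formula.

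As for difficulty, essentially all of the substantive work has been imported from \cite{Lee} and Proposition \ref{prop11}, so the argument is a short assembly rather than a new computation. The one point warranting care is the passage through the finite-index injection \eqref{gmap}: one must confirm that it is genuinely rank-preserving and that the vanishing $h^{2,0}(M_Y)=0$ legitimately reduces $h^{1,1}$ to $b_2$. The most delicate bookkeeping is simply verifying that the map $\mu$ of Proposition \ref{prop11} and the map $\mu$ of the corollary coincide under the dictionary between the divisors $D_i$ and the double curves $Y_{ij}$.
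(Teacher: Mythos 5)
Your proof is correct, but it reaches the conclusion by a different route than the paper. The paper's own argument is two lines: it quotes Theorem 4.3 of \cite{Lee}, which gives the Clemens--Schmid-type count $h^{1,1}(M_Y)=h^2(M_Y)=h^2(Y)-3+1$ (the $-3+1$ coming from the number of components of $Y$), and then identifies $h^2(Y)$ with $\dim\ker\mu$ via the exact sequence of Proposition \ref{prop11}. You instead route the computation through the finite-index injection \eqref{gmap} of Proposition 5.4 in \cite{Lee}, so that your ``$-2$'' comes from $\rk NG^2(Y,\mbz)=2$ rather than from the formula $-k+1$ with $k=3$; the two sources of the correction term are of course the same phenomenon (the degenerate classes coming from the components), but the ingredients cited are genuinely different. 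Your version has the mild advantage of staying entirely inside the cup-product-compatible framework $G^2(Y,\mbz)/NG^2(Y,\mbz)\hookrightarrow H^2(M_Y,\mbz)_f$ that the paper uses later anyway for intersection-form computations, at the cost of a slightly longer chain of reductions. Two small points of hygiene: the claim that $\ker\mu$ is free abelian is unnecessary (and not obviously true if the $H^2(Y_i,\mbz)$ carry torsion) --- what you actually need is only that the rank of $\ker\mu$ equals the dimension of the kernel of $\mu\otimes\mbq$, which follows from exactness of $-\otimes\mbq$; and the reduction $h^{1,1}(M_Y)=b_2(M_Y)$ should be justified by $h^{0,2}(M_Y)=h^2(M_Y,\mco_{M_Y})=0$ together with Hodge symmetry, which you do state. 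Neither point affects the validity of the argument.
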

\begin{proof}
From Theorem 4.3 in \cite{Lee}, we note
$$h^{1,1}(M_Y) = h^2(M_Y) = h^2(Y) - 3 + 1 = h^2(Y)-2.$$
Hence we are done by Proposition \ref{prop11}.
\end{proof}

For a Calabi--Yau threefold $M_Y$, $h^{1,2}(M_Y) = h^{1,1}(M_Y) - \frac{1}{2} e(M_Y)$, So Proposition \ref{eulereqn} and Corollary \ref{hodge} determine all the Hodge numbers of $M_Y$.

\section{$d$-semistability and an example} \label{sec4}

Consider a normal crossing variety $Y=Y_1 \cup Y_2 \cup Y_3$, satisfying Condition \ref{refcond}.
By Theorem \ref{kana}, one can show that $Y$ is smoothable to a Calabi--Yau manifold of dimension $n$  if it is $d$-semistable.
The $d$-semistability (also called as `logarithmic structure' in \cite{KaNa}) is the condition that the normal crossing variety $Y$ is the central fiber in semistable degeneration.
Suppose that a normal crossing variety $X=X_1 \cup X_2 \cup X_3$ is the central fiber in a semistable degeneration $\varphi:\mathcal X \ra \Delta$.
Note
$$X|_{X_j} = \mathcal X_0|_{X_j} \sim \mathcal X_t|_{X_j} = 0 $$
on $X_j$, where $t \neq 0$.
We have  $(X_j|_{X_j})|_{X_{ij}} = (X_j|_{X_i})|_{X_{ij}}$ in $\Pic(X_{ij}) $ since $X_{ij}$ is subvariety of both of $X_i, X_j$.
For distinct $i, j, k$ in $\{1,2,3\}$,
\begin{align*}
0=(X|_{X_j})|_{X_{ij}} &=  \left( (X_i + X_j + X_k) |_{X_j} \right )|_{X_{ij}}\\
                         &=(X_i|_{X_j})|_{X_{ij}}+(X_j|_{X_j})|_{X_{ij}}+ (X_k|_{X_j})|_{X_{ij}} \\
                         &=  X_{(ij)}|_{X_{ij}} + (X_j|_{X_i})|_{X_{ij}}+  {X_{(kij)}}\\
                         &=  X_{(ij)}|_{X_{ij}} + X_{(ji)}|_{X_{ij}}+  X_{(kij)}
\end{align*}
in $\Pic(X_{ij})$.

So let
\begin{equation}   \label{cnormal}
  N_Y(Y_{ij}) =  Y_{(ij)}|_{Y_{ij}} + Y_{(ji)}|_{Y_{ij}}+  Y_{(kij)} \in \Pic(Y_{ij}).
\end{equation}

Noting $D_i = Y_{jk}$ for $\{i,j,k \} = \{1,2,3 \}$,
let us call
$$(N_Y(D_1),N_Y(D_2),N_Y(D_3))  \in  \Pic(D_1) \oplus \Pic(D_2) \oplus \Pic(D_3)$$
the collective normal class of $Y$.
 The triviality of the collective normal class of $Y$ is called the `triple point formula' in two dimensional case (\cite{Pe}) and is a necessary condition for  the $d$-semistability. On the other hand, if $\mathcal {E}xt_Y^1 ( \Omega_Y^1, \mco_Y ) |_D$ is trivial, then $Y$ is said to be $d$-semistable (\cite{Fr}), where $D=D_1 \cup D_2 \cup D_3$.
 The triviality of the collective normal class is equivalent with the $d$-semistability in our case.
\begin{proposition} \label{dsemiprop}
 The triviality of the collective normal class of $Y$ implies the $d$-semistability of $Y$. Hence if the collective normal class of $Y$ is trivial, then $Y$ is a Calabi--Yau manifold of type III.
\end{proposition}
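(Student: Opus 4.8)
The statement has two parts. The substantive one is that triviality of the collective normal class forces $d$-semistability; once that is in hand, the final sentence follows by feeding $Y$ into Theorem \ref{kana}. I would organize the argument around Friedman's infinitesimal invariant $\mathcal{T}^1_Y := \mathcal{E}xt^1_Y(\Omega^1_Y, \mco_Y)$ (\cite{Fr}), recalling that it is an invertible sheaf supported on the double locus $D = D_1 \cup D_2 \cup D_3$ and that, by definition, $Y$ is $d$-semistable precisely when $\mathcal{T}^1_Y \cong \mco_D$. Thus the whole problem reduces to showing that $N_Y(D_k) = 0$ in $\Pic(D_k)$ for $k=1,2,3$ implies $\mathcal{T}^1_Y \cong \mco_D$.

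The heart of the matter is to pin down the class of $\mathcal{T}^1_Y$ on each component $D_k = Y_{ij}$ of the double locus. Away from the triple locus $\tau$, locally $Y$ looks like a product of $\{xy = 0\}$ with a smooth factor, and the classical two–component computation gives $\mathcal{T}^1_Y \cong N_{D_k/Y_i} \otimes N_{D_k/Y_j}$ there. The delicate point is the behaviour along $\tau$, where locally $Y = \{xyz = 0\} \subset \mathbb{C}^4$ and $D$ is the union of three $2$–planes through the $w$–axis. Here I would carry out the direct $\mathcal{E}xt$–computation from the presentation of $\Omega^1_Y$ determined by the Jacobian ideal $(yz,\,xz,\,xy)$ of $\{xyz=0\}$, which shows that $\mathcal{T}^1_Y$ remains invertible on the (now singular) $D$ and that its restriction to $D_k$ picks up an extra twist along $\tau$. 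Globalizing, this yields
\[
\mathcal{T}^1_Y|_{D_k} \;\cong\; N_{D_k/Y_i} \otimes N_{D_k/Y_j} \otimes \mco_{D_k}(\tau),
\]
which in the additive notation of \eqref{cnormal} is exactly the class $N_Y(D_k)$; this is the three–fold analogue of the triple point formula. I expect this local identification near $\tau$, including the correct sign of the $\tau$–twist, to be the main obstacle.

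Granting the identification, triviality of the collective normal class gives $\mathcal{T}^1_Y|_{D_k} \cong \mco_{D_k}$ for every $k$, and it remains to upgrade componentwise triviality to triviality on all of $D$. For this I would use the exact sequence of sheaves on $D$
\[
0 \to \mco_D^{*} \to \bigoplus_{k} \mco_{D_k}^{*} \to (\mco_\tau^{*})^{\oplus 2} \to 0,
\]
valid because the three surfaces meet pairwise, and triply, exactly along $\tau$. Passing to cohomology, the kernel of $\Pic(D) \to \bigoplus_k \Pic(D_k)$ is the cokernel of $\bigoplus_k H^0(\mco_{D_k}^{*}) \to H^0(\mco_\tau^{*})^{\oplus 2}$; since the $D_k$ and $\tau$ are connected and compact (Condition \ref{refcond}(2)), global invertible functions are constants and this is the surjection $(\mathbb{C}^{*})^{3} \to (\mathbb{C}^{*})^{2}$, so its cokernel vanishes and $\Pic(D) \hookrightarrow \bigoplus_k \Pic(D_k)$ is injective. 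Hence $\mathcal{T}^1_Y \cong \mco_D$, i.e. $Y$ is $d$-semistable.

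Finally, for the concluding sentence I would verify that $Y$ now satisfies all hypotheses of Theorem \ref{kana}: it is projective, hence K\"ahler, by the compatible ample divisors of Condition \ref{refcond}(3); its dualizing sheaf is trivial by Condition \ref{refcond}(4); and $H^{1}(Y_i,\mco_{Y_i}) = 0$ together with a Mayer--Vietoris argument feeding on the vanishings of Condition \ref{refcond}(2) gives $H^{2}(Y,\mco_Y) = 0$. Theorem \ref{kana} then smooths $Y$ to a threefold with trivial canonical class and smooth total space, exhibiting $Y$ as the central fiber of a semistable degeneration of Calabi--Yau threefolds. As $Y$ has three components with connected double loci and connected triple locus, its dual complex is a filled triangle, a $2$–simplex, so $Y$ is a $d$-semistable Calabi--Yau threefold of type III.
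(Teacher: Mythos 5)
Your proposal is correct and follows essentially the same route as the paper: both reduce $d$-semistability to the componentwise triviality $\mathcal{T}^1_Y|_{D_k}=N_Y(D_k)=0$ (which the paper simply cites from Friedman rather than recomputing locally) together with injectivity of $\Pic(D)\to\bigoplus_k\Pic(D_k)$, proved via the exact sequence of unit sheaves and the connectedness of the $D_k$ and $\tau$. The only cosmetic difference is that you treat all three components in one normalization sequence while the paper glues them in two pairwise steps.
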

\begin{proof}
Note that $\mathcal {E}xt_Y^1 ( \Omega_Y^1, \mco_Y ) |_{D}$ is a line bundle on $D$ and
$\mathcal {E}xt_Y^1 ( \Omega_Y^1, \mco_Y ) |_{D_i} = N_Y(D_i)$ (\cite{Fr}).
Suppose that $Y$ has a trivial  collective normal class, i.e.\ $\mathcal {E}xt_Y^1 ( \Omega_Y^1, \mco_Y ) |_{D_i} = N_Y(D_i) =\mco_{D_i}$ for any $i \neq j$.
In order  to show $\mathcal {E}xt_Y^1 ( \Omega_Y^1, \mco_Y ) |_{D}$ =0,
it is enough to show  that  the map
$$\eta:\Pic({D_1 \cup D_2 \cup D_3}) \ra  \Pic(D_1)\oplus  \Pic(D_2)\oplus  \Pic(D_3)$$
 is injective, where $\eta$ is the restriction map.
 Consider the exact sequence of sheaves of Abelian groups:
$$1 \ra \mco^*_{ D_1 \cup D_2 } \ra   \mco^*_{ D_1} \times \mco^*_{D_2}  \ra  \mco^*_{ D_1 \cap D_2} \ra 1$$
to deduce a long exact sequence
$$1 \ra \mbc^* \ra \mbc^* \times \mbc^* \stackrel{v}{\ra} \mbc^* {\ra} \Pic(D_1 \cup D_2 ) \stackrel{\lambda}{\ra}  \Pic(D_1 ) \oplus \Pic( D_2) $$
because $D_1 \cup D_2$, $ D_2$, and $ D_1 \cap D_2 = \tau$ are all connected.
Since the map $v$ is surjective, the map $\lambda$ is injective.
Consider again the exact sequence of sheaves of Abelian groups:
$$1 \ra \mco^*_{ D_1 \cup D_2 \cup  D_3 } \ra   \mco^*_{  D_1 \cup D_2  } \times \mco^*_{D_3}  \ra  \mco^*_{ (D_1 \cup D_2) \cap D_3} \ra 1$$
to get another long exact sequence
$$1 \ra \mbc^* \ra \mbc^* \times \mbc^* \stackrel{v'}{\ra} \mbc^* {\ra} \Pic(D_1 \cup D_2 \cup D_3 ) \stackrel{\lambda'}{\ra}  \Pic(D_1 \cup D_2) \oplus \Pic( D_3) $$
because $D_1 \cup D_2 \cup  D_3$, $  D_1 \cup D_2$, and $(D_1 \cup D_2) \cap D_3 = \tau$ are all connected.
The map $v'$ is surjective and so the map $\lambda'$ is injective.
The injectivenesses of $\lambda'$ and $\lambda$ imply that of $\eta$.
\end{proof}

It is relatively easy to find a normal crossing variety that satisfies Condition \ref{refcond}  only  but is not $d$-semistable.  One usually needs to blow up along some suitable divisors of its double loci  to make it $d$-semistable as in Example \ref{kanaex}.
If there are triple loci in the normal crossing varieties, the construction  gets quite complicated. We devote the rest of this section to making $Y$ of the following example $d$-semistable.
\begin{example} \label{quintic}
Consider  a normal crossing $Y$ of two hyperplanes $Y_1, Y_2$ and one cubic threefold $Y_3$ in $\mbp^4$. We all know that $Y$ is smoothable to a quintic Calabi--Yau threefold but the total space is not smooth.
We want to modify $Y$ so that we can apply Theorem \ref{kana}.
 It has trivial dualizing sheaf but is not $d$-semistable ---   its collective normal class is a divisor class $(\mco_{D_1}(5), \mco_{D_2}(5), \mco_{D_3}(5))$. Its dual complex is two-dimensional.
\end{example}
 As in Example \ref{kanaex}, we may need to choose smooth curves $C_{i}$ in the linear system $|\mco_{D_i}(5)|$  on $D_i$ and do blow-ups.
We assume for simplicity that $C_i$'s are all disjoint.
Then  blow up  $Y_1, Y_2, Y_3$ along a smooth curve $C_3, C_1, C_{2}$ in the linear systems $|\mco_{D_{3}}(5)|, |\mco_{D_{1}}(5)|,|\mco_{D_{2}}(5)|$  to get $\widetilde Y_1, \widetilde{Y}_2, \widetilde{Y}_3$ respectively. Let $\widetilde Y_{(ij)}$ be the proper transform of $Y_{(ij)}$ in $\widetilde {Y_{j}}$.  When we try to paste them, we have matching problems. For example, we need to paste $\widetilde Y_{(21)}$ in $\widetilde Y_{1}$ with $\widetilde Y_{(12)}$ in $\widetilde Y_{2}$.
Since the blow-up center $C_{3}$ lies on $Y_{(21)}$,  $\widetilde Y_{(21)}$ is isomorphic with $Y_{(21)}$ but
$C_{1}$ intersects with $Y_{(12)}$ at 15 points. So $\widetilde Y_{(12)}$ is the blow-up  of $Y_{(12)}$ of those 15 points and accordingly $\widetilde Y_{(12)}$ is not isomorphic to $\widetilde Y_{(21)}$.
Hence we cannot paste them.
This kind of problem does not occur in case of normal crossing varieties with only two components, where there is no triple locus.
We will show that one can still paste the varieties after blow-ups if one choose the blow-up center carefully and choose the order of blow-ups in some suitable manner.

Choose smooth curves $C_i$'s that intersect with $\tau = Y_{123}$ transversely, satisfying the condition:
\begin{align}
 C_{i} \cap \tau = C_{j} \cap \tau
\end{align}
for each $i, j$ (Figure \ref{fig1}). Note $C_i$ meets $\tau$ at 15 points but Figure \ref{fig1} is simplified.

\begin{figure}[h]
\begin{tikzpicture}[scale=1]
    \draw (-2,-.5) -- (0,0) -- (1.5,-1.3);
     \draw (0,0) -- (1.8, 1.3);
      \draw (-2,-4.5) -- (0,-4) -- (1.5,-5.3);
     \draw [dashed] (0,-4) -- (1.8, -2.7);
     \draw  (1.5,-2.92) -- (1.8, -2.7);
     \draw  (-2,-.5) --  (-2,-4.5);
     \draw [thick](0,0) -- (0,-4);
     \draw (1.8, -2.7) -- (1.8, 1.3);
     \draw (1.5,-1.3) --  (1.5,-5.3);
    \draw (-2, -2.5) .. controls (-1.5,-1.8) and (-1, -3) ..  (0,-2);
    \draw  (1.5, -3.3)  .. controls (1.3,-2.2) and (.3, -2.2) .. (0,-2);
    \draw [densely dashed] (0,-2) .. controls (.5,-2) and (.5, -1.5) .. (1.16,-1.01);
    \draw  (1.16,-1.01) .. controls (1.5,-.8) and (1.3, -.9) .. (1.8, -.7);
    \node [right ] at (1.1,-.5) {$C_1$};
     \node [right ] at (-1.7,-2.0) {$C_2$};
     \node [right ] at (.3,-2.5) {$C_3$};
     \node [right ] at (-.7,-4.8) {$Y_1$};
     \node [right ] at (1.7,-3.8) {$Y_2$};
     \node [right ] at (-.9,.7) {$Y_3$};
     \node [right ] at (-.4,-1.1) {$\tau$};
     \node [right ] at (-2.0,-.8) {$D_2$};
     \node [right ] at (1.15,.7) {$D_1$};
     \node [right ] at (.85,-4.7) {$D_3$};

  \end{tikzpicture}
  \caption{}
\label{fig1}
\end{figure}

For $\{i, j\}=\{1,2\}$, let $\pi_i: Y'_i \ra Y_i$ be the blow-up along $C_{j}$ on $D_j$, $Y'_{(3i)}$, $Y'_{(ji)}$ be the proper transform of  $Y_{(3i)}, Y_{(ji)}$ respectively  and $E_j$ be the exceptional divisor over $C_j$ (Figure \ref{fig2}).
\begin{figure}[h]
\begin{tikzpicture}[scale=1]
    \draw (-2,-.5) -- (0,0) -- (1.5,-1.3);
     \draw (0,0) -- (1.8, 1.3);
      \draw (-2,-4.5) -- (0,-4) -- (1.5,-5.3);
     \draw [dashed] (0,-4) -- (1.8, -2.7);
     \draw  (1.5,-2.92) -- (1.8, -2.7);
     \draw  (-2,-.5) --  (-2,-4.5);
     \draw [thick](0,0) -- (0,-4);
     \draw (1.8, -2.7) -- (1.8, 1.3);
     \draw (1.5,-1.3) --  (1.5,-5.3);
    \draw (-2, -2.5) .. controls (-1.5,-1.8) and (-1, -3) ..  (0,-2);
    \draw (-1.7, -2.76) .. controls (-1.2,-2.06) and (-.7, -3.26) ..  (0.3,-2.26); 
    \draw (-2, -2.5)--(-1.7, -2.76);
    \draw (0,-2)--(0.3,-2.26); 
     \draw  (1.5, -3.3)  .. controls (1.3,-2.2) and (.7, -2.2) .. (0.2,-2.154);
    \draw [densely dashed] (0,-2) .. controls (.5,-2) and (.5, -1.5) .. (1.16,-1.01);
    \draw [densely dashed] (0.3,-2.26) .. controls (.8,-2.26) and (.8, -1.76) .. (1.46,-1.27);
    \draw  (1.16,-1.01) .. controls (1.5,-.8) and (1.3, -.9) .. (1.8, -.7);
    \draw  (1.46,-1.27) .. controls (1.8,-1.06) and (1.6, -1.16) .. (2.1, -.96); 
    \draw (1.8, -.7)--(2.1, -.96);
    \node [right ] at (1.1,-.5) {$E_1$};
     \node [right ] at (-1.7,-2.0) {$E_2$};
     \node [right ] at (.6,-2.9) {$C'_3$};
     \node [right ] at (-.7,-4.8) {$Y'_1$};
     \node [right ] at (1.7,-3.8) {$Y'_2$};
     \node [right ] at (-.9,.7) {$Y_3$};
     \node [right ] at (-.5,-1.1) {$\tau'$};
  \end{tikzpicture}
  \caption{}
\label{fig2}
\end{figure}
Then $Y'_{(3i)}$ is isomorphic to $Y_{(3i)}$.
 Note $Y_{(3i)}$ is isomorphic with $Y_{(i3)}$.
So $Y'_{(3i)}$ ($\subset Y'_i$) is isomorphic to $Y_{(i3)}$ ($\subset Y_3$).
 Note that $Y'_{(ji)}$ ($\subset Y'_i$) is the blow-up  of $Y_{(ji)}$ at the points $C_i \cap \tau$.
 Since $C_1 \cap \tau= C_2 \cap \tau$,  $Y'_{(21)}$ ($\subset Y'_1$) and $Y'_{(12)}$ ($\subset Y'_2$) are isomorphic.
Let $C'_{3}$ be the proper transform of $C_{3}$ on $Y_{(21)}$ ($\subset Y_1$) in the blow-up $Y'_1 \ra Y_1$.
Since $C_{1} \cap \tau = C_{3} \cap \tau$  and  $Y'_{(21)}$ ($\subset Y'_1$) is the  blow-up  of $Y_{(21)}$ at the points $C_1 \cap \tau$, $C'_{3}$ does not meet with $\tau'$, where $\tau' = Y'_{(21)} \cap Y'_{(31)}$ and accordingly does not meet with $Y'_{(31)}$.
In sum, the curve $C'_{3}$ is disjoint with
$Y'_{(31)}$.
Let $\pi'_1: Y''_1 \ra  Y'_1$ be the blow-up along the curve $C'_{3}$ on $Y'_{(21)}$, $E'_2$ be the proper transform of $E_2$ and $E'_3$ be the exceptional divisor.
Then the blow-up $Y''_1 \ra  Y'_1$ does not change $ Y'_{(31)}$. i.e.\ the proper transform $Y''_{(31)}$ in $Y''_1$ of $Y'_{(31)}$ is isomorphic with  $Y'_{(31)}$ (Figure \ref{fig3}).
Since the curve $C'_{3}$ lies on $Y'_{(21)}$,  the proper transform $Y''_{(21)}$ in $Y''_1$ of $Y'_{(21)}$ is also isomorphic with  $Y'_{(21)}$.

\begin{figure}[h]
\begin{tikzpicture}[scale=1]
    \draw (-2,-.5) -- (0,0) -- (1.5,-1.3);
     \draw (0,0) -- (1.8, 1.3);
      \draw (-2,-4.5) -- (0,-4) -- (1.5,-5.3);
     \draw [dashed] (0,-4) -- (1.8, -2.7);
     \draw  (1.5,-2.92) -- (1.8, -2.7);
     \draw  (-2,-.5) --  (-2,-4.5);
     \draw [thick](0,0) -- (0,-4);
     \draw (1.8, -2.7) -- (1.8, 1.3);
     \draw (1.5,-1.3) --  (1.5,-5.3);
    \draw (-2, -2.5) .. controls (-1.5,-1.8) and (-1, -3) ..  (0,-2);
    \draw (-1.7, -2.76) .. controls (-1.2,-2.06) and (-.7, -3.26) ..  (0.3,-2.26); 
    \draw (-2, -2.5)--(-1.7, -2.76);
    \draw (0,-2)--(0.3,-2.26); 
     \draw  (1.5, -3.3)  .. controls (1.3,-2.2) and (.7, -2.2) .. (0.2,-2.154);
     \draw (0.2,-2.154) -- (0.07,-2.25); 
     \draw (1.5, -3.25) -- (1.37, -3.396);
     \draw  (1.37, -3.395)  .. controls (1.13,-2.296) and (.53, -2.184) .. (0.07,-2.25);
    \draw [densely dashed] (0,-2) .. controls (.5,-2) and (.5, -1.5) .. (1.16,-1.01);
    \draw [densely dashed] (0.3,-2.26) .. controls (.8,-2.26) and (.8, -1.76) .. (1.46,-1.27);
    \draw  (1.16,-1.01) .. controls (1.5,-.8) and (1.3, -.9) .. (1.8, -.7);
    \draw  (1.46,-1.27) .. controls (1.8,-1.06) and (1.6, -1.16) .. (2.1, -.96); 
    \draw (1.8, -.7)--(2.1, -.96);
    \node [right ] at (1.1,-.5) {$E_1$};
     \node [right ] at (-1.7,-2.0) {$E'_2$};
     \node [right ] at (.4,-2.9) {$E'_3$};
     \node [right ] at (-.7,-4.8) {$Y''_1$};
     \node [right ] at (1.7,-3.8) {$Y'_2$};
     \node [right ] at (-.9,.7) {$Y_3$};
     \node [right ] at (-.5,-1.1) {$\tau'$};
  \end{tikzpicture}
    \caption{}
\label{fig3}
\end{figure}
Now we can make a normal crossing variety  by pasting $Y''_1, Y'_2, Y_3$. We note that $Y''_{(21)} \simeq Y'_{(12)}$, $Y'_{(32)} \simeq Y_{(23)}$ and $Y_{(13)} \simeq Y''_{(31)}$. Moreover those isomorphism induce isomorphisms
$$Y''_{(21)} \cap Y''_{(31)} \simeq Y'_{(12)} \cap Y'_{(32)} \simeq Y_{(13)} \cap Y''_{(23)}.$$
Hence we can make a normal crossing variety $\widetilde Y$ such that
there is a normalization
$$\psi : Y''_1 \sqcup Y'_2 \sqcup Y_3 \ra \widetilde Y$$
 with $\psi(Y''_1) = \widetilde Y_1$,  $\psi(Y'_2) = \widetilde Y_2$,  $\psi(Y_3) = \widetilde Y_3$ --- this is already drawn in Figure \ref{fig3}. Let $\widetilde D_i = \widetilde Y_{jk}$ for $\{i,j,k\} = \{1,2,3\}$.
It is not hard to see that $\widetilde Y$ is $d$-semistable and projective (see also Theorem \ref{dsemi}, Theorem \ref{proj}).

By Theorem \ref{kana}, $\widetilde Y$ is smoothable to a Calabi--Yau threefold $M_{\widetilde Y}$.
By the formula in  Proposition \ref{eulereqn}, the topological Euler number of $M_{\widetilde Y}$ is
\begin{align*}
e(M_{\widetilde Y}) = \sum_i e(\widetilde Y_i) - 2 \sum_{i<j}e(\widetilde Y_{ij}) &+ 3 e( \widetilde Y_{123}).
\end{align*}
Note
\begin{align*}
e(\widetilde Y_1) &= e(Y''_1) = e(Y'_1) + e(C'_3) =  e(Y_1) + e(C_2) + e(C'_3)= e(Y_1) + e(C_2) + e(C_3),\\
e(\widetilde Y_2)& = e(Y'_2) = e(Y_2) + e(C_1),\,\, e(\widetilde Y_3) = e(Y_3),\,\, e(\widetilde Y_{12}) = e(Y'_{(12)}) = e(Y_{(12)}) +e(C_1 \cap \tau),\\
e(\widetilde Y_{23}) &= e(Y_{(23)}),\,\, e(\widetilde Y_{13}) = e(Y_{(13)}), \,\,e( \widetilde Y_{123}) =  e( Y_{123}).
\end{align*}
 Hence
\begin{align*}
e(M_{\widetilde Y}) &=       \sum_i e( Y_i) - 2 \sum_{i<j}e( Y_{ij}) + 3 e( Y_{123})
 + \sum_{i} e(C_{i}) - 2 e(C_1 \cap \tau) \\
    &= (4+4-6)-2(3+9+9)+3\cdot0 +(-10 -60 -60) -2 \cdot 15 \\
    &=-200.
\end{align*}
On the other hand,
one can show that  $\rank H^2(\widetilde Y, \mbz)=3$ and
$$( \pi'^*_1 (\pi^*_1(H_1)), {\pi^*_2}(H_2), H_3) $$
 belongs to $G^2(\widetilde Y, \mbz)$, where $H_i$ is the hyperplane section of
 $Y_i$.
Let $\hat H$ be the image in $H^2(\widetilde Y, \mbz)_f$ of  $( \pi'^*_1 (\pi^*_1(H_1)), {\pi^*_2}(H_2), H_3)$ by the map (\ref{gmap}), Since
$${\hat H}^3 =\pi'^*_1 (\pi^*_1(H_1))^3+ {\pi^*_2}(H_2)^3+ H_3^3 = 1+1+3=5$$
is non-zero, $\hat H$ is a non-zero element of $H^2(\widetilde Y, \mbz)_f$.
On the other hand, using Corollary \ref{hodge}, we have
$h^{2}(M_{\widetilde Y})  = 3-2=1$ (see also Lemma \ref{lem2}).
Hence $\{ \hat H \}$ is a basis for $H^2(M_{\widetilde Y}, \mbq)$.
Since the number ${\hat{H}}^3 = 5$ is positive and not a cube of an integer, $\hat H$ is the ample generator of $H^2(M_{\widetilde Y}, \mbz)_f$.
By Equation (\ref{chern}),

$$c_2(\widetilde Y) = \sum_i c_2 (\widetilde Y_i) - \sum_{i \neq j} \widetilde Y_{(ji)} \cdot c_1(\widetilde Y_{i}) = \sum_i c_2 (\widetilde Y_i) -\sum_{i}c_1 (\widetilde Y_{i})^2,$$
where we used $  c_1(\widetilde Y_{i}) = \sum_{j ( \neq i)} \widetilde Y_{(ji)}$.
Hence we have
\begin{align*}
\hat H \cdot c_2(M_{\widetilde Y}) &= \pi_1^*(H_1) \cdot (c_2(\widetilde Y_1)- c_1(\widetilde Y_1)^2) +\pi_2^*(H_2) \cdot (c_2(\widetilde Y_2) - c_1(\widetilde Y_2)^2) \\
&\,\,\,\,\,\,\,\,\,\,\,\,\,\,\,\,\,\,\,\,\,\,\,\,\,\,\,\,\,
\,\,\,\,\,\,\,\,\,\,\,\,\,\,\,\,\,\,\,\,\,\,\,\,\,\,\,\,\,\,\,\,\,\,
\,\,\,\,\,\,\,\,\,\,\,\,\,\,\,\,\,\,\,\,\,\,\,\,\,\,\,\,\,
+\pi_1^*(H_3) \cdot(c_2(\widetilde Y_3) - c_1(\widetilde Y_3)^2)\\
&= (26-(-4)) +(21-1)+ (12-12) = 50.
\end{align*}
So  $M_{\widetilde Y}$ is a Calabi--Yau threefold of Picard number one with invariants:
$$h^{1,1}(M_{\widetilde Y}) =1, h^{1,2}(M_{\widetilde Y}) =101, \hat H^3 = 5, \hat H \cdot c_2(M_{\widetilde Y}) = 50,$$
which  are invariants of a quintic Calabi--Yau threefold. This is expected since the smoothing is a quintic Calabi--Yau threefold in $\mbp^4$.

\section{Producing $d$-semistable models }  \label{sec5}
We apply the procedure in the previous section to more general   normal crossing variety $Y$, satisfying Condition \ref{refcond}.
We also want to consider the case that the blow-up curves $C_i$'s have multiple components since it gives us much more examples.

Before it, we need a notion of sequential blow-ups.
Let $Z$ be a smooth variety and $S$ be its smooth subvariety.  For a sequence of smooth divisors $c_1, c_2, \cdots, c_k$ on $S$.
We define the \emph{ sequential blow-up} $Z' \ra Z$ along $c_1, c_2, \cdots, c_k$ on $S$ as follows:
Let $Z^{(1)} \ra Z$ be the blow-up along $c_1$ and $S^{(1)}$ be the proper transform of $S$. Since the blow-up center $c_1$ lies on $S$, $S^{(1)}$ is isomorphic to $S$. So  $S^{(1)}$ contains copies of $c_1, c_2, \cdots, c_k$. We denote them by $c^{(1)}_1, c^{(1)}_2, \cdots, c^{(1)}_k$.
We construct $Z^{(2)}, Z^{(3)}, \cdots, Z^{(k)}$ inductively as follows.
Let $Z^{(l+1)} \ra Z^{(l)}$ be the blow-up along $c^{(l)}_{l+1}$ and $S^{(l+1)}$ be the proper transform of $S^{(l)}$. Since the blow-up center $c^{(l)}_{l+1}$ lies on $S^{(l)}$, $S^{(l+1)}$ is isomorphic to $S^{(l)}$. So  $S^{(l+1)}$ contains copies of $c^{(l)}_1, c^{(l)}_2, \cdots, c^{(l)}_k$. We denote them by $c^{(l+1)}_1, c^{(l+1)}_2, \cdots, c^{(l+1)}_k.$
Let $Z'=Z^{(k)}$, then the sequential blow-up is the composite $Z' \ra Z $ of the above blow-ups.
Let $S'$ be the proper transform of $S$, then $S'$ is isomorphic to $S$. Let $c'_i = c^{(k)}_i$, then $c'_i$ is isomorphic to $c_i$ for each $i$.
Let $E^{(l)}$ be the exceptional divisor  of the blow-up $Z^{(l)} \ra Z^{(l-1)}$ over $c^{l-1}_{l}$.
We denote the proper transform of  $E^{(l)}$ with respect to the map $Z' \ra  Z^{(l)}$ by $E_l$  and call it the exceptional divisor over the center $c_l$ of the sequential blow-up  $Z' \ra Z$.
Note  the sequential blow-up  depends on the order of blow-ups unless $S$ is a curve and so $c_i$'s are points.
We note some useful facts about  sequential blow-ups.
\begin{lemma} \label{lem1}
Let $S$ have codimension one in $Z$ and $\pi:Z' \ra Z$ be the  sequential blow-up along smooth divisors  $c_1, c_2, \cdots, c_k$ of $S$.
Then
\begin{enumerate}
\item \label{nd}  $$S'|_{S'} \sim (\pi|_S)^{*} (S|_S - c_1-c_2 - \cdots - c_k ).$$
\item \label{pj}Let $H$ be an ample divisor on $Z$, then there is some positive number $M$ such that, for any fixed positive integer $m$ with $m \ge M$,  the divisor
    $$H' := n \pi^* H -  E_k - m E_{k-1} - m^2 E_{k-2} - \cdots - m^{k-1} E_1 $$
    is ample on $Z'$ for sufficiently large $n$.
\item \label{td} Furthermore suppose that a normal crossing divisor $S + D_1 + \cdots + D_m$ is an anticanonical devisor of $Z$ such that any of $D_1,  \cdots,  D_m$ do not contain any of $c_1, c_2, \cdots, c_k$.
Then $S' + D'_1 + \cdots + D'_m$ is an anticanonical divisor of $Z'$, where $D'_i$ is the proper transform of $D_i$.
\end{enumerate}
\end{lemma}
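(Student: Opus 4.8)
The three assertions are independent; parts (\ref{nd}) and (\ref{td}) are direct inductive computations starting from the single-blow-up case, while part (\ref{pj}) is the genuinely delicate point. Throughout I write $\pi = \pi_k \circ \cdots \circ \pi_1$, where $\pi_{l+1}: Z^{(l+1)} \ra Z^{(l)}$ is the blow-up along $c_{l+1}^{(l)} \subset S^{(l)}$ with exceptional divisor $E^{(l+1)}$.

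For (\ref{nd}) the plan is to analyze one stage and then telescope. Since the center $c_{l+1}^{(l)}$ is a Cartier divisor in the smooth divisor $S^{(l)}$, it has codimension two in $Z^{(l)}$, the proper transform $S^{(l+1)}$ maps isomorphically onto $S^{(l)}$ under $\phi_{l+1}:=\pi_{l+1}|_{S^{(l+1)}}$, and $\pi_{l+1}^* S^{(l)} = S^{(l+1)} + E^{(l+1)}$ because $S^{(l)}$ contains the center with multiplicity one. The exceptional divisor meets $S^{(l+1)}$ exactly in the copy $c_{l+1}^{(l+1)}$ of the center, so $E^{(l+1)}|_{S^{(l+1)}} = c_{l+1}^{(l+1)}$, and restricting the relation $S^{(l+1)} = \pi_{l+1}^* S^{(l)} - E^{(l+1)}$ to $S^{(l+1)}$ yields
$$S^{(l+1)}|_{S^{(l+1)}} \sim \phi_{l+1}^*\bigl(S^{(l)}|_{S^{(l)}}\bigr) - c_{l+1}^{(l+1)}.$$
Composing these identities for $l = 0, \ldots, k-1$ and using that each $\phi_{l+1}$ carries a previously accumulated class $c_j^{(l)}$ to $c_j^{(l+1)}$, one telescopes to $S'|_{S'} \sim (\pi|_{S'})^*(S|_S) - c_1' - \cdots - c_k'$, which is the claim after identifying $c_i' = (\pi|_{S'})^* c_i$.

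For (\ref{td}) I would run the same induction on the canonical class. Because each center has codimension two, the blow-up formula reads $K_{Z^{(l+1)}} = \pi_{l+1}^* K_{Z^{(l)}} + E^{(l+1)}$. Feeding in the inductive hypothesis $-K_{Z^{(l)}} \sim S^{(l)} + D_1^{(l)} + \cdots + D_m^{(l)}$, the decisive cancellation is again $\pi_{l+1}^* S^{(l)} - E^{(l+1)} = S^{(l+1)}$, while the hypothesis $c_j \not\subset D_i$ forces the multiplicity of each $D_i^{(l)}$ along the center to be zero, so $\pi_{l+1}^* D_i^{(l)} = D_i^{(l+1)}$ with no exceptional contribution. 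Hence $-K_{Z^{(l+1)}} \sim S^{(l+1)} + \sum_i D_i^{(l+1)}$, and after $k$ steps we obtain the assertion.

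The hard part is (\ref{pj}). The strategy is to separate the relative and absolute contributions: first show that the purely exceptional combination $B := -E_k - m E_{k-1} - \cdots - m^{k-1} E_1$ is ample relative to $\pi$ once $m$ is large, and then invoke the standard fact that a $\pi$-ample divisor becomes ample after adding a sufficiently large multiple of the pullback of an ample class, so that $n \pi^* H + B$ is ample for $n \gg 0$. For the relative ampleness I would argue by induction on the length $k$, using at each stage the elementary fact that for a single blow-up along a smooth center the negative exceptional divisor is relatively ample. The subtlety, and the reason for the geometric weights $m^{k-l}$, is that when successive centers meet, the pullback of an earlier exceptional divisor under a later blow-up acquires a nonnegative correction supported on the new exceptional divisor; the weights must grow fast enough in $m$ to dominate all such cross-terms and keep $B$ relatively ample on every fiber, with $M$ chosen so that any $m \ge M$ works uniformly before $n$ is sent to infinity. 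I expect the bookkeeping of these cross-terms across intersecting centers to be the principal obstacle. In the disjoint-center situation relevant to the applications the argument trivializes, since then each later blow-up leaves the earlier exceptional divisors untouched, the pullbacks contribute no correction terms, and no weights are needed.
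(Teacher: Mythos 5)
Your proposal follows the paper's proof essentially step for step: parts (1) and (3) are the same single-blow-up computation ($\pi_{l+1}^*S^{(l)}=S^{(l+1)}+E^{(l+1)}$, $E^{(l+1)}|_{S^{(l+1)}}=c^{(l+1)}_{l+1}$, $K_{Z^{(l+1)}}=\pi_{l+1}^*K_{Z^{(l)}}+E^{(l+1)}$) followed by telescoping, and for part (2) the paper uses exactly your two-step reduction, first making the weighted exceptional combination relatively ample and then adding $n\pi^*H$. The cross-term bookkeeping you defer is closed in the paper by testing $D'=-E_k-mE_{k-1}-\cdots-m^{k-1}E_1$ against the proper transforms $L$ of exceptional fibers, which generate $\NE(Z'/Z)$: since $E_l\cdot L=-1$, $E_{l'}\cdot L=0$ for $l'<l$, and $0\le E_{l'}\cdot L\le\beta$ for $l'>l$ with $\beta$ a uniform bound, one gets $D'\cdot L\ge m^{k-l}\bigl(1-\tfrac{\beta}{m-1}\bigr)>0$ for every such $L$ once $m\ge M:=\beta+2$.
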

\begin{proof}
Let $\pi^{(l)}:Z^{(l)} \ra Z^{(l-1)}$ be the blow-up along $c^{l-1}_{l}$. Then
\begin{align*}
S^{(l)}|_{S^{(l)}} &\sim {(\pi^{(l)}|_{S^{(l)}})}^{*} (S^{(l-1)}|_{S^{(l-1)}})  - c^{(l)}_{l}\\
&= {(\pi^{(l)}|_{S^{(l)}})}^{*} ( S^{(l-1)}|_{S^{(l-1)}} - c^{(l-1)}_{l}).
\end{align*}
Let $\pi^{[l]} = \pi^{(l)} \circ \pi^{(l-1)} \circ \cdots \circ \pi^{(1)}$, then one can inductively show
$$S^{(l)}|_{S^{(l)}} \sim {(\pi^{[l]}|_{S^{(l)}})}^{*} (S|_S - c_1-c_2 - \cdots - c_{l}).$$
By letting $l=k$, we have the first claim.

Consider fibers over points in $c^{l-1}_l$ in the blow-up     $\pi^{(l)}:Z^{(l)} \ra Z^{(l-1)}$ and let
 $N_l$ be the set of proper transforms in $Z'$ of those fibers. Then the relative cone $\NE(Z'/Z)$ of effective curves with respect to $\pi:Z' \ra Z$ is generated by curves in    $\bigcup_l N_l$.
 For any $L \in N_l$,
 $E_{l'} \cdot L =-1$ for $l'=l$,  $E_{l'} \cdot L =0$ for $l'<l$ and  $E_{l'} \cdot L \ge 0$ for $l'>l$.
For a divisor
$$D = -a_1 E_1 -a_2 E_2 - \cdots - a_k E_k,$$
 if $D \cdot L > 0 $ for any $L \in \bigcup_l N_l$, then $D$   is relatively ample with respect to the map $Z' \ra Z$.
Note that the set $\{ E_{l'} \cdot L | L \in N_l , l' < l \}$ has the maximum $\beta$.
Let $M  = \beta + 2$.
For a divisor
$$ D'=-  E_k - m E_{k-1} - m^2 E_{k-2} - \cdots - m^{k-1} E_1$$
with  $m>1$ and $L \in N_l$,
\begin{align*}
D' \cdot L &= -  E_k \cdot L - m E_{k-1} \cdot L - m^2 E_{k-2} \cdot L - \cdots - m^{k-l} E_l \cdot L \\
           &= -  E_k \cdot L - m E_{k-1} \cdot L - m^2 E_{k-2} \cdot L - \cdots + m^{k-l-1} E_{l-1} \cdot L   -m^{k-l} \\
           &\ge  - \beta - m \beta  - m^2 \beta  -\cdots - m^{k-l-1} \beta  + m^{k-l} \\
           &\ge m^{k-l} \left(1- \frac{\beta}{m-1} \right).
\end{align*}
So if $m \ge M$, then $D' \cdot L > 0$ and accordingly $D'$ is relatively ample with respect to the map $Z' \ra Z$. Hence we have the second claim.

Let $D_i^{(l)}$ be the proper transform in $Z^{(l)}$ of $D_i$ by the map $Z^{(l)} \ra Z$.
One can also inductively show
$$-K_{Z^{(l)}} \sim S^{(l)} + D^{(l)}_1 + \cdots + D^{(l)}_m$$
and by letting $l=k$ we have the third claim.

\end{proof}

From now on, we assume that the normal crossing variety $Y$ is three dimensional (Of course, we always assume that $Y$ satisfies Condition \ref{refcond}). Then its triple loci
$\tau$ is a curve.

Let $C_i=c_{1i}+c_{2i}+\cdots + c_{\alpha_i i}$  be  a divisor on $D_i$ for each $i$  that is a  sum of  distinct irreducible smooth curves $c_{ji}$'s on $D_i$. We require that  $C_i$'s  satisfy:
\begin{condition} \label{ccond}
Each $C_i$ intersects with $\tau$ transversely and
$$C_1 \cap \tau = C_2 \cap \tau =  C_3 \cap \tau.$$
\end{condition}


If each $C_{i}$ is linearly equivalent to the divisor class $N_Y(D_i)$, defined in (\ref{cnormal}), we call $\{ C_1, C_2, C_3\}$ a collective normal divisor of $Y$.

For $\{i, j\}=\{1,2\}$, let $\pi_i: Y'_i \ra Y_i$ be the sequential blow-up along
$c_{1j}, c_{2j}, \cdots, c_{\alpha_j j}$  on $D_j$, $Y'_{(3i)}$ be the proper transform of  $Y_{(3i)}$ and $E_{lj}$ be the exceptional divisor over $c_{lj}$.
Then $Y'_{(3i)}$ is isomorphic to $Y_{(3i)}$.
 Note $Y_{(3i)}$ is isomorphic with $Y_{(i3)}$.
So $Y'_{(3i)}$ ($\subset Y'_i$) is isomorphic to $Y_{(i3)}$ ($\subset Y_3$).
 Note that $Y'_{(ji)}$ ($\subset Y'_i$) is the blow-up  of $Y_{(ji)}$ at the points $C_i \cap \tau$.
 Since $C_1 \cap \tau= C_2 \cap \tau$,  $Y'_{(21)}$ ($\subset Y'_1$) and $Y'_{(12)}$ ($\subset Y'_2$) are isomorphic.
Let $c'_{l3}$ be the proper transform of $c_{l3}$ on $Y_{(21)}$ ($\subset Y_1$) in the sequential blow-up $Y'_1 \ra Y_1$.
Since $C_{1} \cap \tau = C_{3} \cap \tau$  and  $Y'_{(21)}$ ($\subset Y'_1$) is the  blow-up  of $Y_{(21)}$ at the points $C_1 \cap \tau$, the effective divisor $C'_{3}:=\sum_l c'_{l3}$ does not meet with $\tau'$, where $\tau' = Y'_{(21)} \cap Y'_{(31)}$ and accordingly does not meet with $Y'_{(31)}$.
In sum, the divisor $C'_{3}$ is disjoint with
$Y'_{(31)}$.
Let $\pi'_1: Y''_1 \ra  Y'_1$ be the sequential blow-up along $c'_{13}, c'_{23}, \cdots, c'_{\alpha_3 3}$  on $Y'_{(21)}$, $E'_{l2}$ be the proper transform of $E_{l2}$ and $E'_{l3}$ be the exceptional divisor over $c'_{l3}$.
Then the sequential blow-up $Y''_1 \ra  Y'_1$ does not change $ Y'_{(31)}$. i.e.\ the proper transform $Y''_{(31)}$ in $Y''_1$ of $Y'_{(31)}$ is isomorphic with  $Y'_{(31)}$.
Since the curves $c'_{l3}$'s lie on $Y'_{(21)}$,  the proper transform $Y''_{(21)}$ in $Y''_1$ of $Y'_{(21)}$ is also isomorphic with  $Y'_{(21)}$.
Let us try to  make a normal crossing variety  by pasting $Y''_1, Y'_2, Y_3$. We note that $Y''_{(21)} \simeq Y'_{(12)}$, $Y'_{(32)} \simeq Y_{(23)}$ and $Y_{(13)} \simeq Y''_{(31)}$. Moreover those isomorphisms induce isomorphisms
$$Y''_{(21)} \cap Y''_{(31)} \simeq Y'_{(12)} \cap Y'_{(32)} \simeq Y_{(13)} \cap Y''_{(23)}.$$
Hence we can make a normal crossing variety $\widetilde Y$ such that
there is a normalization
$$\psi : Y''_1 \sqcup Y'_2 \sqcup Y_3 \ra \widetilde Y$$
 with $\psi(Y''_1) = \widetilde Y_1$,  $\psi(Y'_2) = \widetilde Y_2$,  $\psi(Y_3) = \widetilde Y_3$.
 We simply identify $Y''_1$ with $\widetilde Y_1$,  $Y'_2$ with $ \widetilde Y_2$ and  $Y_3$ with $\widetilde Y_3$.  Let $\widetilde D_i = \widetilde Y_{jk}$ for $\{i,j,k\} = \{1,2,3\}$.

 We also note that $Y'_{(21)} \simeq Y'_{(12)}$, $Y'_{(32)} \simeq Y_{(23)}$ and $Y_{(13)} \simeq Y'_{(31)}$. Hence we can make a normal crossing variety $\check Y$ by pasting $Y'_1, Y'_2, Y_3$
 ($\check Y_1 = Y'_1,\check Y_2 = Y'_2,\check Y_3 = Y_3$).
 Then there are natural maps
 $$\widetilde Y \stackrel{\pi'}{\ra} \check Y  \stackrel{\pi}{\ra}   Y.$$
 Note that $\pi|_{\check Y_i} =\pi_i$ for $i=1,2$ and $\pi'|_{\widetilde Y_1} = \pi'_1$.
 As before, let $\check D_i = \check Y_{jk}$ for $\{i,j,k\} = \{1,2,3\}$.

Since $\widetilde Y_i, \widetilde Y_{ij}$ are birational to $ Y_i,  Y_{ij}$ respectively, $\widetilde Y$ also satisfies (2) in Condition \ref{refcond}.

\begin{theorem} \label{dsemi}
 $\widetilde Y$ also satisfies (4) in Condition \ref{refcond} (i.e.\ it has trivial dualizing sheaf). Furthermore
If $\{C_1, C_2, C_3 \}$ is a collective normal class, then $\widetilde Y$ has trivial collective normal class.
\end{theorem}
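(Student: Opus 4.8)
The plan is to prove the two assertions separately, driving each by one part of Lemma~\ref{lem1}. Triviality of the dualizing sheaf is item~(4) of Condition~\ref{refcond}, namely that $-\sum_{i\neq j}\widetilde Y_{(ij)}$ be a canonical divisor of each $\widetilde Y_j$; since $Y$ already satisfies~(4), I would transport the anticanonical divisor $-K_{Y_j}=\sum_{i\neq j}Y_{(ij)}$ through each blow-up using Lemma~\ref{lem1}(\ref{td}). For $\pi_1\colon Y'_1\to Y_1$ the blow-up surface is $S=Y_{(31)}$ and the remaining anticanonical component is $Y_{(21)}$, which contains no $c_{l2}$ (the $c_{l2}$ lie on $Y_{(31)}$ and meet $Y_{(21)}$ only in the finite set $C_2\cap\tau\subset\tau$); hence $-K_{Y'_1}\sim Y'_{(21)}+Y'_{(31)}$. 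For $\pi'_1\colon Y''_1\to Y'_1$ the blow-up surface is $S=Y'_{(21)}$ and the remaining component $Y'_{(31)}$ is disjoint from the center $C'_3$ by construction, so a second application gives $-K_{\widetilde Y_1}\sim\widetilde Y_{(21)}+\widetilde Y_{(31)}$. The identical argument for $\pi_2\colon Y'_2\to Y_2$ (blow-up surface $Y_{(32)}$, remaining component $Y_{(12)}$, which avoids the $c_{l1}$) handles $\widetilde Y_2$, while $\widetilde Y_3=Y_3$ is untouched. This gives~(4) with no hypothesis on the $C_i$.

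For the collective normal class I would compute $N_{\widetilde Y}(\widetilde Y_{ij})=\widetilde Y_{(ij)}|_{\widetilde Y_{ij}}+\widetilde Y_{(ji)}|_{\widetilde Y_{ij}}+\widetilde Y_{(kij)}$ one double locus at a time, identifying each affected $\widetilde Y_{ij}$ with a blow-up $\rho\colon\widetilde Y_{ij}\to Y_{ij}$ of the original double locus at the points $C_1\cap\tau=C_2\cap\tau=C_3\cap\tau$ supplied by Condition~\ref{ccond}. The local fact I would use is that blowing up $Y_i$ along a curve merely transverse to a double-locus surface leaves that surface's normal bundle a pure pullback (the center not being contained in the surface), whereas blowing up along a curve lying on the surface subtracts that curve from the normal bundle, by Lemma~\ref{lem1}(\ref{nd}). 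For $\widetilde D_3=\widetilde Y_{12}$ the three summands then become $\rho^*(Y_{(12)}|_{Y_{12}})$ (the center $C_1$ of $\pi_2$ is transverse to $Y_{(12)}$), $\rho^*(Y_{(21)}|_{Y_{12}})-C'_3$ (Lemma~\ref{lem1}(\ref{nd}) for $\pi'_1$, whose center $C'_3$ lies on $Y'_{(21)}$), and the proper transform $\rho^*\tau-\sum_p E_p$ of the triple locus, with $p$ ranging over $C_1\cap\tau$. Writing $C'_3=\rho^*C_3-\sum_p E_p$ (legitimate because $C_3\cap\tau=C_1\cap\tau$), the two exceptional contributions cancel and the sum collapses to $\rho^*\bigl(Y_{(12)}|_{Y_{12}}+Y_{(21)}|_{Y_{12}}+\tau-C_3\bigr)=\rho^*(N_Y(D_3)-C_3)$, which is trivial since $C_3\sim N_Y(D_3)$.

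The other two loci are easier, as $\widetilde Y_{23}\simeq Y_{23}$ and $\widetilde Y_{31}\simeq Y_{31}$ are blown up at no points. For $\widetilde D_1=\widetilde Y_{23}$ only $\pi_2$ acts, and since its center $C_1$ lies on $Y_{(32)}$, Lemma~\ref{lem1}(\ref{nd}) gives $\widetilde Y_{(32)}|_{\widetilde Y_{23}}=Y_{(32)}|_{Y_{23}}-C_1$ while the other two summands are unchanged, so $N_{\widetilde Y}(\widetilde Y_{23})=N_Y(D_1)-C_1\sim0$. Symmetrically, for $\widetilde D_2=\widetilde Y_{31}$ the blow-up $\pi_1$ contributes $-C_2$ (and $\pi'_1$, being disjoint from $Y'_{(31)}$, contributes nothing), so $N_{\widetilde Y}(\widetilde Y_{31})=N_Y(D_2)-C_2\sim0$. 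Hence all three normal classes vanish and the collective normal class of $\widetilde Y$ is trivial.

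I expect the main difficulty to lie in the bookkeeping for $\widetilde Y_{12}$: for each blow-up one must correctly decide whether a given double-locus surface is the blow-up surface (so Lemma~\ref{lem1}(\ref{nd}) subtracts the center) or is only met transversally (so its normal bundle pulls back), and one must verify that the exceptional divisor appearing in the proper transform of $\tau$ is exactly cancelled by the one in $C'_3$. This cancellation is precisely where Condition~\ref{ccond} ($C_1\cap\tau=C_2\cap\tau=C_3\cap\tau$) is indispensable, and confirming that the gluing isomorphisms $Y''_{(21)}\simeq Y'_{(12)}$ are compatible with the common blow-down $\rho$ is the most delicate point.
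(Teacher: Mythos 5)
Your proposal is correct and follows essentially the same route as the paper: part (4) via Lemma~\ref{lem1}(\ref{td}) applied to each of the three blow-ups, and the collective normal class via Lemma~\ref{lem1}(\ref{nd}) together with the pullback/cancellation of exceptional classes forced by Condition~\ref{ccond}. The only difference is organizational --- the paper factors the computation for $\widetilde Y_{12}$ through the intermediate variety $\check Y$ (proving first that $\check Y$ has collective normal class $(0,0,\check C_3)$), whereas you perform the same cancellation in a single step on the blown-up double locus.
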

\begin{proof}
 $Y_{(ji)} + Y_{(ki)}$ is an anticanonical normal crossing divisor of $Y_i$ for each $\{i,j,k \} = \{1,2,3 \}$ and so $\widetilde Y_{(13)} + \widetilde Y_{(23)}$ is  an anticanonical normal crossing divisor of $\widetilde Y_3$.
By (\ref{td}) in Lemma \ref{lem1}, $\check Y_{(ji)} + \check Y_{(ki)}$ is  an anticanonical normal crossing divisor of $\check Y_i$ for $i=1,2$ and
$\widetilde Y_{(21)} + \widetilde Y_{(31)}$ is  an anticanonical normal crossing divisor of $\widetilde Y_1$.
Since $\widetilde Y_2 = \check Y_2$, $\widetilde Y_{(12)} + \widetilde Y_{(31)}$ is  an anticanonical normal crossing divisor of $\widetilde Y_2$.
In sum,  $\widetilde Y_{(ji)} + \widetilde Y_{(ki)}$ is an anticanonical normal crossing divisor of  $\widetilde Y_i$ for any $\{i,j,k \} = \{1, 2, 3\}$ and accordingly  $\widetilde Y$ has trivial dualizing sheaf.

Now assume that $\{C_1, C_2, C_3 \}$ is a collective normal class of $Y$.
Firstly we show that the following claim.
\medskip

\emph{Claim.} $\{\check C_1, \check C_2, \check C_3 \}$ is a collective normal class of $\check Y$, where $\check C_1$, $\check C_2$ are the zero divisors on $\check Y_{23}$, $\check Y_{31}$ respectively and $\check C_3 = c'_{13}+c'_{23}+ \cdots + c'_{\alpha_3 3}$.

\medskip
\emph{Proof of Claim.}
Note $N_Y(Y_{i3}) =  Y_{(i3)}|_{Y_{i3}} + Y_{(3i)}|_{Y_{i3}}+  Y_{(ji3)} \sim C_j$ for $\{i, j\}=\{1,2\}$.
By (\ref{nd}) in Lemma \ref{lem1},
for $\{i, j\}=\{1,2\}$
$$\check Y_{(3i)}|_{\check Y_{(3i)}} \sim (\pi_i|_{\check Y_{i3}} )^* (Y_{(3i)}|_{Y_{(3i)}}  - c_{1j}- c_{2j}- \cdots - c_{\alpha_j j}) = (\pi_i|_{\check Y_{i3}} )^* (Y_{(3i)}|_{Y_{(3i)}}  - C_j).$$
And note $(\pi_i|_{\check Y_{i3}} )^*(Y_{(i3)}|_{Y_{i3}}) = \check Y_{(i3)}|_{\check Y_{i3}}$ and $(\pi_i|_{\check Y_{i3}} )^*( Y_{(ji3)}) =  \check Y_{(ji3)}$.
So we have
\begin{align*}
N_{\check Y}(\check Y_{i3}) &=  \check Y_{(i3)}|_{\check Y_{i3}} + \check Y_{(3i)}|_{\check Y_{i3}}+  \check Y_{(ji3)} \\
                  &\sim(\pi_i|_{\check Y_{i3}} )^*(Y_{(i3)}|_{Y_{i3}}) + (\pi_i|_{\check Y_{i3}} )^* (Y_{(3i)}|_{Y_{(3i)}})  - C_j)+  (\pi_i|_{\check Y_{i3}} )^*( Y_{(ji3)})\\
                  &=(\pi_i|_{\check Y_{i3}} )^*(Y_{(i3)}|_{Y_{i3}}) + Y_{(3i)}|_{Y_{(3i)}}  - C_j+  Y_{(ji3)})\\
                  &\sim(\pi_i|_{\check Y_{i3}} )^*(N_Y(Y_{i3}) -C_j)\\
                  &=(\pi_i|_{\check Y_{i3}} )^*(0)\\
                  &=0.
\end{align*}
Note
$$ (\pi_i|_{\check Y_{12}} )^* (Y_{(12)}|_{Y_{12}}) = \check Y_{(12)}|_{\check Y_{12}}, (\pi_i|_{\check Y_{12}} )^* (Y_{(21)}|_{Y_{12}}) = \check Y_{(21)}|_{\check Y_{12}}$$
and
$$(\pi_i|_{\check Y_{12}} )^* (Y_{(312)}) = \check Y_{(312)} + (\sum_l E_{li} )|_{\check Y_{12}}.$$
We have $(\sum_l E_{li}) |_{\check Y_{12}} = (\sum_l E_{lj})|_{\check Y_{12}}$ because of Condition \ref{ccond}.
Hence
\begin{align*}
N_{\check Y}(\check Y_{12}) &=  \check Y_{(12)}|_{\check Y_{12}} + \check Y_{(21)}|_{\check Y_{12}}+  \check Y_{(312)} \\
                  &\sim(\pi_i|_{\check Y_{12}} )^* (Y_{(12)}|_{Y_{12}}) + (\pi_i|_{\check Y_{12}} )^* (Y_{(21)}|_{Y_{12}}) +  (\pi_i|_{\check Y_{12}} )^* (Y_{(312)}) -(\sum_l E_{li} )|_{\check Y_{12}} \\
                  &=(\pi_i|_{\check Y_{12}} )^* (Y_{(12)}|_{Y_{12}}+Y_{(21)}|_{Y_{12}}) +  Y_{(312)}) -(\sum_l E_{li} )|_{\check Y_{12}}\\
                  &\sim(\pi_i|_{\check Y_{12}} )^* (C_3) - (\sum_l E_{li} )|_{\check Y_{12}}\\
                  &= \check C_3,
\end{align*}
which finishes the proof of claim.

Now let us complete the proof of the theorem.
$N_{\check Y}(\check Y_{23}) =0$ implies $N_{\widetilde Y}(\widetilde Y_{23}) =0$
because $\widetilde Y_2= \check Y_2$ and  $\widetilde Y_3= \check Y_3$.
$N_{\check Y}(\check Y_{13}) =0$ implies $N_{\widetilde Y}(\widetilde Y_{13}) =0$
Since the blow-up centers $c'_{l3}$'s are disjoint from $\check Y_{13}$.
Now it remains to show $N_{\widetilde Y}(\widetilde Y_{23}) =0$.
This can be showed similarly as before.
\begin{align*}
N_{\widetilde Y}(\widetilde Y_{12}) &=  \widetilde Y_{(12)}|_{\widetilde Y_{12}} + \widetilde Y_{(21)}|_{\widetilde Y_{12}}+  \widetilde Y_{(312)} \\
                  &\sim(\pi'_1|_{ \widetilde Y_{12}} )^*(\check Y_{(12)}|_{\check Y_{12}}) + (\pi'_1|_{\widetilde Y_{12}} )^* (\check Y_{(21)}|_{\check Y_{(12)}})  - C'_3)+  (\pi'_1|_{\widetilde Y_{12}} )^*( \check Y_{(312)})\\
                  &=(\pi'_1|_{ \widetilde Y_{12}} )^*(Y_{(12)}|_{Y_{12}}+ Y_{(21)}|_{Y_{(12)}}  - C'_3 +   \check Y_{(312)})\\
                  &=(\pi'_1|_{ \widetilde Y_{12}} )^*(N_Y(\check Y_{12}) -C'_3)\\
                  &=(\pi'_1|_{ \widetilde Y_{12}} )^*(0)\\
                  &=0.
\end{align*}

\end{proof}

To have the projectivity, we introduce another condition for $C_1, C_2, C_3$:

\begin{condition} \label{cond2}$\,$
\begin{enumerate}
\item $\alpha_1=\alpha_2=\alpha_3$ and
$$c_{l1}\cap \tau = c_{l2}\cap \tau = c_{l3}\cap \tau$$
for each $l$. Let $\alpha=\alpha_1$. \label{al}
\item For each  $l=1,2, \cdots, \alpha$ and distinct $i, j = 1, 2, 3$,
there is a divisor $G_{li}$ of $Y_i$ such that $G_{li} | _{D_j}$ is linearly equivalent to
$c_{lj}$. \label{am}
\end{enumerate}
\end{condition}

\begin{theorem} \label{proj}
If $C_i$'s satisfy Condition \ref{cond2}, then $\widetilde Y$ is projective.
\end{theorem}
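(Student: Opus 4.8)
The plan is to exhibit an ample line bundle on $\widetilde Y$ by gluing ample bundles on the three components. First I would produce divisors $\widetilde H_1,\widetilde H_2,\widetilde H_3$ on $\widetilde Y_1,\widetilde Y_2,\widetilde Y_3$, each ample, satisfying $\widetilde H_i|_{\widetilde Y_{ij}}\sim\widetilde H_j|_{\widetilde Y_{ij}}$ for every pair. Running the exact sequence of Proposition~\ref{prop11} (Proposition 2.6 of \cite{Fu}) for $\widetilde Y$ in place of $Y$, such a triple lies in $\ker\mu$ and therefore lifts to a single $\widetilde H\in\Pic(\widetilde Y)$ restricting to the $\widetilde H_i$. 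Since a line bundle on a reduced complete variety is ample as soon as its restriction to each irreducible component is ample, $\widetilde H$ is then ample and $\widetilde Y$ is projective. Thus everything reduces to building the matching triple.

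The key geometric fact is that, among the three double surfaces, only $\widetilde D_3=\widetilde Y_{12}$ is modified: it is the blow-up $\rho_3\colon\widetilde D_3\to D_3$ of $D_3$ at the points $C_1\cap\tau=\bigsqcup_l(c_{l1}\cap\tau)$, with exceptional curves $F_p$, while $\widetilde D_1\simeq D_1$ and $\widetilde D_2\simeq D_2$. I would start from the ample divisors $H_i$ on $Y_i$ of Condition~\ref{refcond}(3), which already satisfy $H_i|_{Y_{ij}}\sim H_j|_{Y_{ij}}$. Fixing $m$ large as in Lemma~\ref{lem1}(\ref{pj}), set the exceptional weights $b_l=m^{2\alpha-l}$ on $E'_{l2}$ and $d_l=m^{\alpha-l}$ on $E'_{l3}$ (these are exactly the weights produced by the $2\alpha$-step sequential blow-up $\widetilde Y_1\to Y_1$), and $a_l=b_l-d_l=m^{\alpha-l}(m^\alpha-1)$ on $E_{l1}$ for the $\alpha$-step blow-up $\widetilde Y_2\to Y_2$. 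Then define
$$\widetilde H_1=n\,\pi'^*_1\pi^*_1 H_1-\sum_l b_l E'_{l2}-\sum_l d_l E'_{l3},\qquad \widetilde H_2=n\,\pi^*_2 H_2-\sum_l a_l E_{l1}-\sum_l d_l\,\pi^*_2 G_{l2},$$
and $\widetilde H_3=n\,H_3-\sum_l b_l\,G_{l3}$, where the $G_{li}$ are the divisors supplied by Condition~\ref{cond2}(\ref{am}).

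To verify the matching I would restrict to the three double surfaces. On $\widetilde D_3$ the pullbacks $\pi'^*_1\pi^*_1 H_1$ and $\pi^*_2 H_2$ restrict to $\rho_3^*(H_1|_{D_3})\sim\rho_3^*(H_2|_{D_3})$; the divisors $E_{l1}$ and $E'_{l2}$ restrict to the packet $\sum_{p\in c_{l1}\cap\tau}F_p$, the packets coinciding for the two components precisely because Condition~\ref{cond2}(\ref{al}) gives $c_{l1}\cap\tau=c_{l2}\cap\tau=c_{l3}\cap\tau$, while $E'_{l3}$ restricts to $c'_{l3}=\rho_3^*c_{l3}-\sum_p F_p$. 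The coefficient of each $F_p$ then agrees by the choice $a_l=b_l-d_l$, and the leftover $\rho_3^*c_{l3}$ discrepancy is cancelled by $-d_l\,\pi^*_2 G_{l2}$ via $G_{l2}|_{D_3}\sim c_{l3}$. On $\widetilde D_1$ and $\widetilde D_2$ the relevant exceptional divisors restrict to the curves $c_{l1}$, $c_{l2}$ (their centers lie on these surfaces, which stay unmodified), matched across components by $G_{l2}|_{D_1}\sim c_{l1}$, $G_{l3}|_{D_1}\sim c_{l1}$ and $G_{l3}|_{D_2}\sim c_{l2}$; the three families of constraints are mutually consistent because $b_l=(b_l-d_l)+d_l$. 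This is exactly the role of Condition~\ref{cond2}.

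For ampleness, each exceptional part is relatively ample over $Y_i$: for $\widetilde Y_1$ the weights $b_l,d_l$ are the standard ones of Lemma~\ref{lem1}(\ref{pj}), and for $\widetilde Y_2$ the weights $a_l=m^{\alpha-l}(m^\alpha-1)$ are a positive rescaling of the standard ones, hence still obey the fast-growth inequality that makes $-\sum_l a_l E_{l1}$ relatively ample; the $G$-corrections are pullbacks and do not change relative ampleness over the base. Writing each $\widetilde H_i$ as $\pi^*(nH_i-\sum_l(\cdots)G_{li})+(\pi\text{-ample})$ and taking $n\gg0$ then makes all three ample at once. I expect the main obstacle to be the restriction bookkeeping that reconciles the two-step blow-up on $\widetilde Y_1$ with the one-step blow-up on $\widetilde Y_2$ and the untouched $\widetilde Y_3$: one must check that the relation $a_l=b_l-d_l$ forced by the $F_p$-matching stays compatible with the geometric-growth condition of Lemma~\ref{lem1}(\ref{pj}). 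This is where the explicit powers of $m$ and the component-wise coincidence of Condition~\ref{cond2}(\ref{al}) enter decisively, and it is the step most prone to sign and indexing errors.
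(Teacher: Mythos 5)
Your proposal is correct and follows essentially the same route as the paper: the paper likewise builds matching ample divisors from the $H_i$ of Condition \ref{refcond}, the exceptional weights of Lemma \ref{lem1}(\ref{pj}), and the auxiliary divisors $G_{li}$ of Condition \ref{cond2}, and then glues them across the double loci. The only difference is presentational --- the paper runs the construction in two stages (first the intermediate $\check Y$, then $\widetilde Y$, via $\check G_{l2}=\pi_2^*(G_{l2})-E_{l1}$), and your weights $b_l=m^{2\alpha-l}$, $d_l=m^{\alpha-l}$, $a_l=b_l-d_l$ are exactly what that two-stage construction produces when unwound (take $\tilde n=m^{\alpha}$, $m'=m$); the two-stage phrasing has the small advantage that Lemma \ref{lem1}(\ref{pj}) applies as stated at each step, rather than needing to be extended to the composite $\widetilde Y_1\to Y_1$, whose centers lie on two different surfaces and which is therefore not literally a sequential blow-up in the paper's sense.
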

\begin{proof}
Let $H_1, H_2, H_3$ be  ample divisors on $Y_1, Y_2, Y_3$ respectively such that $H_i|_{Y_{ij}} \sim H_j|_{Y_{ij}}$.
Then  by (\ref{pj}) in Lemma \ref{lem1},  for each $\{i, j\} = \{1,2 \}$, there is a positive number $m$ such that
$$\check H_i :=n \pi_i^*(H_i) - \sum_{l=1}^\alpha m^{\alpha -l} E_{lj}$$
is ample on $\check Y_i$  for   sufficiently large $n$.
Also the divisor
$$\check H_{3}:=n H_3 - \sum_l m^{\alpha -l} G_{l3} $$
is ample on $Y_3$ for sufficiently large $n$.
Hence one can choose some $n$ such that the divisor $\check H_i$ is ample on $\check Y_i$ for each $i=1,2,3$.
Firstly Condition \ref{ccond} implies $\check H_1 |_{\check D_3} \sim \check H_2 |_{\check D_3}$.
 The condition (\ref{am}) in Condition \ref{cond2} implies $\check H_i|_{\check D_j} \sim \check H_3|_{\check D_j}$ for distinct $i,j =  1, 2$. So we showed that $\check Y$ is projective.
Let $\check G_{l2} = \pi_2^*(G_{l2}) - E_{l1}$ for $l=1,2, \cdots, \alpha$.
Then
$$\check G_{l2}|_{\check D_3} \sim c'_{l3}.$$
Again,  by (\ref{pj}) in Lemma \ref{lem1}, there is some positive integer $m'$ such that
the divisors
$$\widetilde H_1 : = \tilde n {\pi'_1}^*(\check H_1) - \sum_{l=1}^\alpha m^{\alpha -l} E'_{l3}$$
and
$$\widetilde H_2 : = \tilde n \check H_2  -\sum_{l=1}^\alpha m^{\alpha -l} \check G_{l3}$$
that are ample on $\widetilde Y_1$ and $\widetilde Y_2$ respectively for sufficiently large $\tilde n$.
Let $\widetilde H_3 = \tilde n \check H_3$.
Then $\widetilde H_i$ is ample on $\widetilde Y_i$ and  $\widetilde H_i|_{\widetilde D_j} \sim \widetilde H_3|_{\widetilde D_j}$ for distinct $i,j =  1, 2$. So we showed that $\widetilde Y$ is also projective.

\end{proof}

\begin{lemma} \label{lem2}
If $C_i$'s satisfy Condition \ref{cond2},
$$h^{2}(\widetilde Y) = h^2(Y) + 2 \alpha.$$
\end{lemma}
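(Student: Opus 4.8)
The plan is to compute both $h^2(\widetilde Y)$ and $h^2(Y)$ through Proposition \ref{prop11}, which identifies the second Betti number of a normal crossing variety of this shape with the kernel of the restriction--difference map
$$\mu:\bigoplus_i H^2(Y_i,\mbz)\longrightarrow\bigoplus_k H^2(D_k,\mbz),$$
and likewise $h^2(\widetilde Y)=\dim\ker\widetilde\mu$ for the analogous map $\widetilde\mu$ built from the $\widetilde Y_i$ and $\widetilde D_k$. By rank--nullity it then suffices to track two things: how much the source $\widetilde S:=\bigoplus_i H^2(\widetilde Y_i,\mbz)$ grows, and how much $\rank\widetilde\mu$ grows. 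I would show the source gains $3\alpha$ while the rank gains exactly $\alpha$, so that $h^2(\widetilde Y)-h^2(Y)=3\alpha-\alpha=2\alpha$.

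First I would record the elementary dimension counts. A sequential blow-up along $\alpha$ smooth curves adds $\alpha$ independent exceptional divisor classes to $H^2$ of a smooth threefold, so $h^2(\widetilde Y_1)=h^2(Y_1)+2\alpha$ (the classes $E'_{l2}$ and $E'_{l3}$), $h^2(\widetilde Y_2)=h^2(Y_2)+\alpha$ (the classes $E_{l1}$), and $h^2(\widetilde Y_3)=h^2(Y_3)$; the source therefore grows by $3\alpha$. On the target side, the analysis preceding Theorem \ref{dsemi} gives $\widetilde D_1\simeq D_1$ and $\widetilde D_2\simeq D_2$, while $\widetilde D_3=\widetilde Y_{12}$ is the blow-up of $D_3$ at the points $C_i\cap\tau$, with new exceptional curves $e_p$. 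I would split $\widetilde S=S_0\oplus S_{\mathrm{new}}$ and the target $\widetilde T=T_0\oplus T_{\mathrm{new}}$ into the pullback (``old'') part and the exceptional-class part. Since pullbacks restrict to pullbacks, $\widetilde\mu(S_0)\subseteq T_0$, and under the isomorphisms $S_0\cong S$, $T_0\cong T$ the restricted map $\widetilde\mu|_{S_0}$ is precisely $\mu$; in particular $\rank(\widetilde\mu|_{S_0})=\rank\mu$.

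The heart of the argument is to evaluate $\widetilde\mu$ on the $3\alpha$ exceptional source classes. Writing $P_l:=c_{l1}\cap\tau=c_{l2}\cap\tau=c_{l3}\cap\tau$ (Condition \ref{cond2}) and computing the restrictions to the double loci, I expect
$$\widetilde\mu(E'_{l2},0,0)=(0,-c_{l2},\textstyle\sum_{p\in P_l}e_p),\quad \widetilde\mu(E'_{l3},0,0)=(0,0,c'_{l3}),\quad \widetilde\mu(0,E_{l1},0)=(c_{l1},0,-\textstyle\sum_{p\in P_l}e_p),$$
where $c'_{l3}=\pi^*c_{l3}-\sum_{p\in P_l}e_p$. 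Passing to the basis $u_l=E'_{l2}+E_{l1}$, $v_l=E'_{l2}+E'_{l3}$, $w_l=E'_{l2}$ of $S_{\mathrm{new}}$, the exceptional-curve contributions cancel in the first two, giving $\widetilde\mu(u_l)=(c_{l1},-c_{l2},0)$ and $\widetilde\mu(v_l)=(0,-c_{l2},c_{l3})$, both in $T_0$. Crucially, Condition \ref{cond2}(2) supplies divisors $G_{li}$ on $Y_i$ with $G_{li}|_{D_j}\sim c_{lj}$, and a short check gives $\widetilde\mu(u_l)=\mu(G_{l1},G_{l2},0)$ and $\widetilde\mu(v_l)=\mu(G_{l1},0,0)$; hence $\widetilde\mu(u_l),\widetilde\mu(v_l)\in\mu(S)=\widetilde\mu(S_0)$. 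Thus $\widetilde\mu(\widetilde S)=\widetilde\mu(S_0)+\langle\widetilde\mu(w_l)\rangle_l$, and since the $T_{\mathrm{new}}$-projections $\sum_{p\in P_l}e_p$ are linearly independent (disjoint supports, by the transversality in Condition \ref{ccond}) while vanishing on $\widetilde\mu(S_0)$, the classes $\widetilde\mu(w_l)$ are independent modulo $\widetilde\mu(S_0)$. Therefore $\rank\widetilde\mu=\rank\mu+\alpha$, and combining with the source count yields $h^2(\widetilde Y)=h^2(Y)+3\alpha-\alpha=h^2(Y)+2\alpha$.

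The main obstacle I anticipate is purely in the bookkeeping of the restrictions of the exceptional classes to the partly blown-up double loci—getting the curve classes $c_{li}$, the proper transform $c'_{l3}$, and the exceptional curves $e_p$ correctly placed with correct signs—and in verifying that the combinations $u_l,v_l$ land in the old image $\mu(S)$. This last point is exactly where Condition \ref{cond2}(2) (the existence of the divisors $G_{li}$) and the incidence relation $C_1\cap\tau=C_2\cap\tau=C_3\cap\tau$ are indispensable; without them the rank would grow by more than $\alpha$ and the clean formula would fail.
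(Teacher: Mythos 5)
Your argument is correct and arrives at the formula by a genuinely different organization of the same underlying facts. The paper also starts from Proposition \ref{prop11}, but it factors the construction through the intermediate variety $\check Y$ and works entirely on the kernel side: it exhibits the explicit classes $E_l=(E_{l2},E_{l1},G_{l3})\in\ker\check\mu$ and $E'_l=(E'_{l3},\check G_{l2},0)\in\ker\tilde\mu$ and asserts that each kernel is the direct sum of the pulled-back old kernel with the span of these $\alpha$ new classes, so that $h^2$ grows by $\alpha$ at each of the two stages. You instead argue in one step on the image side via rank--nullity, showing the source grows by $3\alpha$ while the rank of the restriction--difference map grows by exactly $\alpha$. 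Both proofs hinge on the same two inputs: Condition \ref{cond2}(\ref{am}) supplying the divisors $G_{li}$ (you use them to push $\widetilde\mu(u_l)$ and $\widetilde\mu(v_l)$ into the old image; the paper uses them as entries of its kernel elements), and the incidence condition $c_{l1}\cap\tau=c_{l2}\cap\tau=c_{l3}\cap\tau$, which makes the exceptional-curve contributions over $P_l$ cancel. What your version buys is an explicit justification of the upper bound on the kernel: the classes $\sum_{p\in P_l}e_p$ are independent in the new part of the target (the $P_l$ are disjoint by transversality in Condition \ref{ccond}) and vanish on $\widetilde\mu(S_0)$, so no further kernel can appear --- a point the paper dispatches with a one-line appeal to Condition \ref{ccond} and connectivity. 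What the paper's version buys is explicit generators of $G^2(\widetilde Y,\mbz)$ beyond the pullback classes, which are needed later for cup-product computations; these are recoverable from your relations, e.g.\ $(E'_{l2},E_{l1},0)-({\pi'_1}^*\pi_1^*G_{l1},\pi_2^*G_{l2},0)$ and $(E'_{l2}+E'_{l3},0,0)-({\pi'_1}^*\pi_1^*G_{l1},0,0)$ lie in $\ker\tilde\mu$. The bookkeeping you flag as the main risk (the restrictions $E'_{l2}|_{\widetilde D_3}=\sum_{p\in P_l}e_p$, $E'_{l3}|_{\widetilde D_2}=0$, $c'_{l3}=\pi^*c_{l3}-\sum_{p\in P_l}e_p$, and the fact that the second-stage blow-up does not alter these classes) does check out.
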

\begin{proof}

The maps
$$\widetilde Y \stackrel{\pi'}{\ra} \check Y  \stackrel{\pi}{\ra}   Y.$$
induces pullbacks
$$H^2(Y_i, \mbq) \ra  H^2(\check Y_i, \mbq),  H^2(D_j, \mbq) \ra  H^2(\check D_j, \mbq)$$
and
$$ H^2(\check Y_i, \mbq) \ra  H^2(\widetilde Y_i, \mbq),  H^2(\check D_j, \mbq) \ra  H^2(\widetilde D_j, \mbq),$$
which altogether make the following commutative diagram.
$$
\xymatrix{
 H^2(\widetilde Y_1, \mbq) \oplus  H^2(\widetilde Y_2, \mbq) \oplus  H^2(\widetilde Y_3, \mbq) \ar[r]^{\tilde \mu}  & H^2(\widetilde D_1, \mbq) \oplus H^2(\widetilde D_2, \mbq) \oplus H^2(\widetilde D_3, \mbq)\\
 H^2(\check Y_1, \mbq) \oplus  H^2(\check Y_2, \mbq) \oplus  H^2(\check Y_3, \mbq) \ar[r]^{\check \mu}  \ar[u]^{g} & H^2(\check D_1, \mbq) \oplus H^2(\check D_2, \mbq) \oplus H^2(\check D_3, \mbq) \ar[u]\\
 H^2(Y_1, \mbq) \oplus  H^2(Y_2, \mbq) \oplus  H^2(Y_3, \mbq) \ar[r]^\mu \ar[u]^{f}  & H^2(D_1, \mbq) \oplus H^2(D_2, \mbq) \oplus H^2(D_3, \mbq) \ar[u]
}$$
Firstly by Proposition \ref{prop11}, $\ker \mu \simeq H^2(Y, \mbq)$, $\ker \check \mu \simeq H^2(\check Y, \mbq)$ and $\ker \tilde \mu \simeq H^2(\widetilde Y, \mbq)$. Note also that the maps $f, g$ are injective.
We can regard divisors as elements of $H^2(Y_i, \mbq), H^2(D_j, \mbq)$.
It is easy to check
$$E_l :=(E_{l2}, E_{l1}, G_{l3} ) \in \ker \check \mu.$$
Condition \ref{ccond} and the connectivity condition imply
 $$\ker \check \mu =  f ( \ker \mu ) \oplus \langle E_1, E_2, \cdots, E_\alpha \rangle   $$
 and so
 $$\dim (\ker \check \mu) = \dim ( f ( \ker \mu)) + \alpha =  h^2(Y) + \alpha.$$
It is also easy to check
 $$E'_l :=(E'_{l3}, \check G_{l2}, 0 ) \in \ker \tilde \mu.$$
Again the connectivity condition implies
 $$  \ker \tilde \mu = g ( \ker \check \mu) \oplus \langle E'_1, E'_2, \cdots, E'_\alpha \rangle $$
 and
 $$\dim \ker (\tilde \mu) = \dim (g(\ker \check \mu)) + \alpha = ( h^2(Y)  + \alpha) + \alpha = h^2(Y)+ 2 \alpha.$$
 So we are done.

\end{proof}

Combining Theorem \ref{dsemi}, \ref{proj},  we have the following corollary, which is the most important theorem in this note.
\begin{corollary} \label{refthmxxxxx}
Suppose  that $\{C_1, C_2, C_3 \}$ is a collective normal class of $Y$, satisfying Condition \ref{ccond}, \ref{cond2}.
Then $\widetilde Y$ also  satisfies Condition \ref{refcond} and it is $d$-semistable.  $\widetilde Y$ is smoothable to a Calabi--Yau threefold $M_{\widetilde Y}$ and hence it is a Calabi--Yau threefold of type III.
\end{corollary}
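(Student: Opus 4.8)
The plan is to verify, one at a time, the hypotheses of the Kawamata--Namikawa smoothing theorem (Theorem \ref{kana}) for $\widetilde Y$, drawing on Theorem \ref{dsemi}, Theorem \ref{proj} and Proposition \ref{dsemiprop}, and then to read off the type from the combinatorics. First I would check that $\widetilde Y$ satisfies all of Condition \ref{refcond}. Part (1) is immediate, since $\widetilde Y$ is birational to $Y$ and hence three-dimensional. Part (2) was already observed just before Theorem \ref{dsemi}: each $\widetilde Y_i$ and $\widetilde Y_{ij}$ is birational to $Y_i$ and $Y_{ij}$, and the vanishings $h^1(\mco)=h^2(\mco)=0$ are birational invariants of smooth projective varieties, while the connectivity of the $\widetilde D_i$ and of $\widetilde\tau$ is inherited from the construction. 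Part (4) is the first assertion of Theorem \ref{dsemi}. For part (3) I would not merely quote the projectivity of Theorem \ref{proj} but extract from its proof the explicit ample divisors $\widetilde H_1,\widetilde H_2,\widetilde H_3$ on the components, which are built there precisely so that $\widetilde H_i|_{\widetilde D_j}\sim\widetilde H_3|_{\widetilde D_j}$ for $i,j\in\{1,2\}$ and $\widetilde H_1|_{\widetilde D_3}\sim\widetilde H_2|_{\widetilde D_3}$; this is exactly the compatible ample system demanded by Condition \ref{refcond}(3).

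Next I would establish $d$-semistability. Because $\{C_1,C_2,C_3\}$ is a collective normal class of $Y$, the second assertion of Theorem \ref{dsemi} gives that $\widetilde Y$ has trivial collective normal class, and Proposition \ref{dsemiprop} converts this into $d$-semistability. Projectivity from Theorem \ref{proj} supplies the K\"ahler hypothesis, and Theorem \ref{dsemi} supplies the triviality of the dualizing sheaf, so hypotheses (1) and (2) of Theorem \ref{kana} are in hand.

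The only genuinely new computation is hypothesis (3) of Theorem \ref{kana}, namely $H^1(\widetilde Y_i,\mco_{\widetilde Y_i})=0$ and $H^2(\widetilde Y,\mco_{\widetilde Y})=0$. The former is the $a=1$ case of Condition \ref{refcond}(2). For the latter --- the one Kawamata--Namikawa condition not literally listed in Condition \ref{refcond} --- I would run the hypercohomology spectral sequence of the Mayer--Vietoris resolution
$$0\ra\mco_{\widetilde Y}\ra\bigoplus_i\mco_{\widetilde Y_i}\ra\bigoplus_{i<j}\mco_{\widetilde Y_{ij}}\ra\mco_{\widetilde\tau}\ra 0.$$
In total degree $2$ its $E_1$-terms are $\bigoplus_i H^2(\widetilde Y_i,\mco)$, $\bigoplus_{i<j}H^1(\widetilde Y_{ij},\mco)$ and $H^0(\widetilde\tau,\mco)$; the first two vanish by Condition \ref{refcond}(2) (note $\widetilde Y_{ij}$ is a double surface $\widetilde D_k$), and the last is one-dimensional (as $\widetilde\tau$ is connected) but is killed by the $d_1$-differential $\bigoplus_{i<j}H^0(\widetilde Y_{ij},\mco)\ra H^0(\widetilde\tau,\mco)$, the alternating restriction of constants, which is surjective. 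Hence every contribution to $H^2$ dies at the $E_2$-page and $H^2(\widetilde Y,\mco_{\widetilde Y})=0$.

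With all hypotheses verified, Theorem \ref{kana} produces a smoothing $M_{\widetilde Y}$ with trivial canonical class and smooth total space; the same cohomological vanishings force the intermediate cohomology $H^1(M_{\widetilde Y},\mco)=H^2(M_{\widetilde Y},\mco)=0$ (consistently with the Hodge-number computations of \cite{Lee,Leeth}), so $M_{\widetilde Y}$ is a Calabi--Yau threefold. Finally, $\widetilde Y$ has the same stratification pattern as $Y$ --- three components, three double surfaces and one triple curve --- so its dual complex is a $2$-simplex, and $\widetilde Y$ is a $d$-semistable Calabi--Yau threefold of type III. I expect the surjectivity of the $d_1$-differential in the Mayer--Vietoris step to be the only place requiring care; everything else is a direct citation of Theorems \ref{dsemi} and \ref{proj} and Proposition \ref{dsemiprop}.
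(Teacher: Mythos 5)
Your proposal is correct and follows essentially the same route as the paper, which simply combines Theorem \ref{dsemi} (trivial dualizing sheaf and trivial collective normal class, hence $d$-semistability via Proposition \ref{dsemiprop}) with Theorem \ref{proj} (projectivity, with the compatible ample system built in its proof) and then invokes Theorem \ref{kana}. Your Mayer--Vietoris spectral sequence verification of $H^2(\widetilde Y,\mco_{\widetilde Y})=0$ is a correct filling-in of a step the paper leaves implicit in its blanket assertion that Condition \ref{refcond} contains all hypotheses of Theorem \ref{kana} except $d$-semistability.
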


\begin{theorem} Assume all the conditions in Corollary \ref{refthmxxxxx} and let $M_{\widetilde Y}$ be the smoothing of $\widetilde Y$ with smooth total space, then
$$h^{1,1}(M_{\widetilde Y}) = h^2(Y) + 2 \alpha - 2$$
and
$$e(M_{\widetilde Y}) = \sum_i e(Y_i) - 2 \sum_j e(D_j)+ 3 e(\tau) +\sum_{i, l} e(c_{il}) -2 \gamma, $$
where $\gamma = e(\tau \cap C_1)$.
\end{theorem}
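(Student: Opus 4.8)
The plan is to prove the two formulas separately, since $h^{1,1}$ follows almost immediately from the machinery already established, while the Euler number requires tracking the blow-up centers through the two-stage construction $\widetilde Y \to \check Y \to Y$.

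For the Hodge number, I would invoke Corollary \ref{hodge}, which gives $h^{1,1}(M_{\widetilde Y}) = \dim(\ker \tilde\mu) - 2$, where $\tilde\mu$ is the map of Proposition \ref{prop11} applied to $\widetilde Y$. By Lemma \ref{lem2}, $\dim(\ker \tilde\mu) = h^2(\widetilde Y) = h^2(Y) + 2\alpha$. Substituting gives $h^{1,1}(M_{\widetilde Y}) = h^2(Y) + 2\alpha - 2$ directly. This step is essentially bookkeeping, and its correctness rests entirely on the earlier results, so I would state it briefly.

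For the Euler number, the strategy is to apply Proposition \ref{eulereqn} to $\widetilde Y$ and then rewrite each term $e(\widetilde Y_i)$, $e(\widetilde D_j)$, $e(\tau')$ in terms of the original data on $Y$. The key is to use additivity of the topological Euler characteristic under blow-ups: blowing up a smooth variety along a smooth center $c$ of codimension two adds $e(\mathbb{P}^1\text{-bundle over } c) - e(c) = e(c)$ to the Euler number (since each point of $c$ is replaced by a $\mathbb{P}^1$, contributing an extra $e(c)$). Tracking this through the sequential blow-ups defining $Y'_i$, $Y''_1$ and the associated $\widetilde D_j$, one gets $e(\widetilde Y_1) = e(Y_1) + \sum_l e(c_{l2}) + \sum_l e(c_{l3})$, $e(\widetilde Y_2) = e(Y_2) + \sum_l e(c_{l1})$, and $e(\widetilde Y_3) = e(Y_3)$, mirroring the computation in Example \ref{quintic}. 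Likewise $e(\widetilde D_3) = e(\widetilde Y_{12}) = e(Y_{12}) + e(C_1 \cap \tau)$, where $C_1 \cap \tau = C_2 \cap \tau = \tau \cap C_1$ has Euler number $\gamma$, while $e(\widetilde D_1) = e(D_1)$, $e(\widetilde D_2) = e(D_2)$, and $e(\tau') = e(\tau)$ since the blow-up centers meet $\tau$ only at the $\gamma$ points that are resolved away from the relevant double curves. Substituting all of these into the formula $e(M_{\widetilde Y}) = \sum_i e(\widetilde Y_i) - 2\sum_j e(\widetilde D_j) + 3 e(\tau')$ and collecting the correction terms should produce $\sum_i e(Y_i) - 2\sum_j e(D_j) + 3 e(\tau) + \sum_{i,l} e(c_{il}) - 2\gamma$.

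The main obstacle I anticipate is the careful bookkeeping of exactly which blow-ups contribute to which component and which double locus, and in particular verifying that only the double locus $\widetilde D_3 = \widetilde Y_{12}$ picks up the $-2\gamma$ correction while $\widetilde D_1$ and $\widetilde D_2$ remain unchanged. This subtlety is precisely the matching issue discussed around Example \ref{quintic}: the centers $c_{lj}$ lie on the double curves, so their proper transforms in the glued variety $\widetilde Y$ are isomorphic to the originals except where they meet $\tau$, and Condition \ref{ccond} (that $C_1 \cap \tau = C_2 \cap \tau = C_3 \cap \tau$) is exactly what guarantees the count of resolved points equals $\gamma$ uniformly. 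I would therefore devote most of the proof to justifying the individual Euler-number identities for the components and double loci, citing Condition \ref{ccond} at the step where the common intersection with $\tau$ is used, and leave the final substitution as a routine collection of terms.
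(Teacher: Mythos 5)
Your proposal is correct and follows essentially the same route as the paper: the first formula is obtained by combining Corollary \ref{hodge} with Lemma \ref{lem2}, and the second by applying Proposition \ref{eulereqn} to $\widetilde Y$ and tracking the Euler-characteristic corrections ($+e(c)$ per curve blown up, $+\gamma$ for the point blow-ups on $\widetilde D_3$, with $\widetilde D_1$, $\widetilde D_2$, $\tilde\tau$ unchanged) exactly as in the paper's computation.
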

\begin{proof}
By Corollary \ref{hodge} and Lemma \ref{lem2}
$$h^{1,1}(M_{\widetilde Y}) = \dim(\ker \tilde \mu)-2  =h^2(Y) + 2 \alpha - 2.$$
By Proposition \ref{eulereqn},
\begin{align*}
e(M_{\widetilde Y}) &= e(\widetilde Y_1) + e(\widetilde Y_2) + e(\widetilde Y_3) - 2 \left(e(\widetilde D_1) + e(\widetilde D_2) + e(\widetilde D_3)\right) + 3e(\tilde \tau )\\
   &= (e(\check Y_1) + \sum_l e(c'_{l3})) +  e(\check Y_2)+e(Y_3)
   - 2 \left(e( D_1) + e( D_2) + (e( D_3) + \gamma)\right) + 3 e(\tau)\\
   &=((e(Y_1) + \sum_l e_(c_{l2})) + \sum_l e(c_{l3})) +  (e( Y_2) + \sum_l e(c_{l1}))+e(Y_3)\\
   &\,\,\,\,\,\,\,\,\,\,\,\,\,\,\,\,\,\,\,\,\,\,\,\,\,\,
   \,\,\,\,\,\,\,\,\,\,\,\,\,\,\,\,\,\,\,\,\,\,\,\,\,\,\,
   \,\,\,\,\,\,\,\,\,\,\,\,\,\,\,\,\,\,\,\,\,\,\,\,\,\,\,\,  - 2 \left(e( D_1) + e( D_2) + (e( D_3) + \gamma)\right) + 3 e(\tau)\\
   &=e( Y_1) + e( Y_2) + e( Y_3) - 2 \left(e( D_1) + e( D_2) + e( D_3)\right) + 3e( \tau ) + \sum_{i, l} e(c_{il}) -2 \gamma.
\end{align*}

\end{proof}

Since $h^{1,2} (M_{\widetilde Y})= h^{1,1}(M_{\widetilde Y})- \frac{1}{2} e(M_{\widetilde Y})$, we have determined all the Hodge numbers of $M_{\widetilde Y}$.

Now return to Example \ref{quintic}. One can choose curves
$$C_i = c_{1i}+ c_{2i}+ \cdots + c_{\alpha i} $$
 with $c_{li}$ belonging to the linear system $| \mco_{D_i}(a_l) |$ such that
$C_1, C_2, C_3$ satisfies Condition \ref{ccond}, Condition \ref{cond2} and the conditions in Lemma \ref{lem2}.
If $a_1 + a_2 + \cdots + a_\alpha = 5$, then $\{C_1, C_2, C_3 \}$ is a collective normal curve of $Y$.
As described, we can build a normal crossing variety $\widetilde Y$ with respect to $C_i$'s, where we let $G_{li} = \mco_{Y_i}(a_l)$. Then $\widetilde Y$ is projective and $d$-semistable and so it is smoothable to a Calabi--Yau threefold $M_{\widetilde Y}$. Noting $h^2(Y)=1$ and $\gamma = 15$, we have

$$h^{1,1}(M_{\widetilde Y}) = 2 \alpha-1,$$
\begin{align*}
e(M_{\widetilde Y}) & = -70+ \sum_l(3-a_l)a_l + 2 \sum 3(1-a_l)a_l\\
                    &= -25 -7 \sum_l a_l^2
\end{align*}
and so
$$h^{1,2}(M_{\widetilde Y}) = h^{1,1}(M_{\widetilde Y}) - \frac{1}{2}e(M_{\widetilde Y}) = \frac{1}{2}(4 \alpha +  7 \sum_l a_l^2 + 23).$$

Note that Hodge numbers of $M_{\widetilde Y}$ depend on $a_l$'s but not on the ordering of $a_l$'s.
Let us take some examples. For the both cases of $( a_1, a_2 )=(1,4)$ and $(4,1)$, we have the same Hodge numbers are $(h^{1,1}, h^{1,2}) = (3, 75)$.
But the sequential blow-up depends on the ordering of blow-ups and so one can expect that smoothed Calabi--Yau threefolds are different.
Let $\widetilde Y^{14}$ and $\widetilde Y^{41}$ be normal crossing varieties for $( a_1, a_2)=(1,4)$ and $(4,1)$ respectively.
By using the technics in \S 6, \S 7 in \cite{Lee}, one can show that the ternary cubic forms on $H^2(M_{\widetilde Y^{14}}, \mbz)$, $H^2(M_{\widetilde Y^{41}}, \mbz)$ are different.
There is another variation. Let $Y_1$ be a cubic threefold in $\mbp^4$ and $Y_2, Y_3$ be hyperplanes in $\mbp^4$ respectively. Then $Y=Y_1 \cup Y_2 \cup Y_3$ is the same normal crossing with the previous one and only the index ordering is changed. However if we build $\widetilde Y$, it is different from the previous one because $Y_1$ is sequentially blow up twice and $Y_2$ is sequentially blow up once --- choosing different index orders of $Y_i$'s changes $\widetilde Y$ and gives us non-homeomorphic Calabi--Yau threefolds $M_{\widetilde Y}$'s with same Hodge numbers.
In this way, one can build several non-homeomorphic Calabi--Yau threefolds with same Hodge numbers.
For the rest of examples, we will give only Hodge numbers of $M_{\widetilde Y}$'s:

\begin{center}
    \begin{tabular}{| c || c | c | c |c | c | c | c |}
    \hline
    $( a_1, a_2,  \cdots, a_\alpha )$ &(1,1,1,1,1)& (1,1,1,2 )&  (1,1,3)&  (1,4 )&  (1,2,2 )&  (2,3 )&  ( 5) \\
    \hline
    $(h^{1,1}, {h^{1,2}}^{\,})$ & (9, 39 )&  (7, 44)&  (5, 56)&  (3, 75)&  (5, 49)&  $(3, 61)^*$&  (1, 101)\\
    \hline
    \end{tabular}
\end{center}

In the table, `$\,^*$' means (also will mean later) that such Hodge pairs do not overlap with those of Calabi--Yau threefolds in toric construction (\cite{Al, KrSk1, KrSk2}) or examples recently constructed in \cite{Lee1, Lee11}.
The Hodge pair $(1, 101)$ comes from the quintic threefold in $\mbp^4$ which is a toric variety.
Except for this case, there are multiple non-homeomorphic Calabi--Yau threefolds having the Hodge numbers as previously explained and  although other pairs of Hodge numbers appear in the toric construction, probably those Calabi--Yau threefolds are different from ones of same Hodge numbers in the toric construction --- there seems no reason that they are the same ones. One possible way of distinguishing them is comparing the cubic forms on their second integral cohomology classes.

\section{More examples}  \label{sec6}
Applying technics in the previous sections, we construct more $d$-semistable Calabi--Yau threefolds of type III. In each of the following examples, we start with a three-dimensional normal crossing variety $Y= Y_1 \cup Y_2 \cup Y_3$  with  a divisor
$C_i = c_{1i} +c_{2i} +···+ c_{\alpha i} $on $D_i$  for $i=1, 2 , 3$ such that
\begin{enumerate}
\item $Y$ satisfies Condition \ref{refcond},
\item  $c_{ji}$ ’s are distinct irreducible smooth curves on $D_i$ and
\item $\{ C_1 ,C_2 , C_3 \}$ is a collective normal class of $Y$, satisfying Condition \ref{ccond}, \ref{cond2}.
\end{enumerate}

As we did in the previous section, we make a new normal crossing $\widetilde Y$, by pasting sequential blow-ups of $Y_i$'s with respect to the curves $c_{ij}$'s, which is now smoothable to a Calabi--Yau threefold $M_{\widetilde Y}$. We give the Hodge numbers of $M_{\widetilde Y}$'s in each example.
\begin{example}

Let $S, S'$ be smooth quadratic hypersurfaces in $\mbp^3$ that intersect each other transversely.
We note that $S, S'$ are  projectively isomorphic. So there is an isomorphism $\phi:\mbp^3 \ra \mbp^3$ such that $\phi(S) = S'$. Let $\phi':S \ra S'$ be the restriction of $\phi$ to $S$.
Let $Y_1, Y_2, Y_3$ be copies of $\mbp^3$ and $S_i, S'_i$ be copies in $Y_i$ of $S, S'$.
Identifying $S_1$ with $S'_2$, $S_2$ with $S'_3$ and $S_3$ with $S'_1$ via copies of the isomorphism $\phi'$, we have a normal crossing variety $Y = Y_1 \cup Y_2 \cup Y_3$.
It is easy to check that $Y$ is projective and that have a trivial dualizing sheaf.
$(\mco_{D_1}(6), \mco_{D_2}(6), \mco_{D_2}(6))$ is the collective normal class of $Y$.
  One can choose curves $C_i = c_{1i}+ c_{2i}+ \cdots + c_{\alpha i} $ with $c_{li}$ belonging to the linear system $\mco_{D_i}(a_l)$ such that
$C_1, C_2, C_3$ satisfies Condition \ref{ccond}, Condition \ref{cond2} and the conditions in Lemma \ref{lem2}.
If $a_1 + a_2 + \cdots + a_\alpha = 6$, then $\{C_1, C_2, C_3 \}$ is a collective normal curve of $Y$.
As described, we build a normal crossing variety $\widetilde Y$ with respect to $C_i$'s. Then $\widetilde Y$ is projective and $d$-semistable and so it is smoothable to a Calabi--Yau threefold $M_{\widetilde Y}$. Its dual complex is two-dimensional, so it is a $d$-semistable Calabi--Yau threefold of type III.
Noting $h^2(Y)=1$ and $\gamma = 24$, we have

$$h^{1,1}(M_{\widetilde Y}) = 2 \alpha-1,$$
\begin{align*}
e(M_{\widetilde Y}) & = -60+ \sum_l 3 \cdot 2 (2-a_l)a_l = 12 -6 \sum_l a_l^2
\end{align*}
and so
$$h^{1,2}(M_{\widetilde Y}) = h^{1,1}(M_{\widetilde Y}) - \frac{1}{2}e(M_{\widetilde Y}) = \frac{1}{2}(4 \alpha +  6 \sum_l a_l^2 - 14).$$
The Hodge number numbers
of $M_{\widetilde Y}$ are as follows:
\begin{center}
  \begin{tabular}{ |l | l| }
    \hline
    $( a_1, a_2,  \cdots, a_\alpha )$ & $(h^{1,1}, h^{1,2})$ \\ \hline \hline
    (1,1,1,1,1,1)& (11,23)\\ \hline
    (1,1,1,1,2 )& (9,27)\\ \hline
    (1,1,1,3)&  (7,37)\\ \hline
    (1,1,2,2)& (7,31)\\ \hline
    (1,1,4 )& (5,53)\\ \hline
     (1,2,3 )& (5,41)\\ \hline
      (2,2,2 )& (5,35)\\ \hline
     (1,5)& (3,75)\\ \hline
      (2,4)&(3,57)\\ \hline
      (3,3)&(3,51)\\ \hline
     (6) &(1,103)\\ \hline
  \end{tabular}
\end{center}

\end{example}

\begin{example}

Now consider  a normal crossing $Y$ of  smooth hypersurfaces $Y_1, Y_2$ and $Y_3$ in a smooth quartic hypersurface in $\mbp^5$ with degrees $1,1$, and $2$ respectively.
Clearly $Y$ is projective and that have a trivial dualizing sheaf.
$(\mco_{D_1}(4), \mco_{D_2}(4), \mco_{D_2}(4))$ is the collective normal class of $Y$.
  One can choose curves $C_i = c_{1i}+ c_{2i}+ \cdots + c_{\alpha i} $ with $c_{li}$ belonging to the linear system $\mco_{D_i}(a_l)$ such that
$C_1, C_2, C_3$ satisfies Condition \ref{ccond}, Condition \ref{cond2} and the conditions in Lemma \ref{lem2}.
If $a_1 + a_2 + \cdots + a_\alpha = 3$, then $\{C_1, C_2, C_3 \}$ is a collective normal curve of $Y$.
Build a normal crossing variety $\widetilde Y$ with respect to $C_i$'s. Then $\widetilde Y$ is a $d$-semistable Calabi--Yau threefold of type III. Note $h^2(Y)=1$ and $\gamma = 16$.
The Hodge number numbers
of $M_{\widetilde Y}$ are as follows:
\begin{center}
  \begin{tabular}{ |l | l| }
    \hline
    $( a_1, a_2,  \cdots, a_\alpha )$ & $(h^{1,1}, h^{1,2})$ \\ \hline \hline
  (1, 1, 1, 1)& (7,35)\\ \hline
    (1, 1, 2)& (5,43)\\ \hline
    (1, 3)&  $(3,61)^*$\\ \hline
    (2, 2)& (3,51)\\ \hline
    (4)& $(1,89)^*$\\ \hline
  \end{tabular}
\end{center}

\end{example}

\begin{example}

Let $Y$ be  a normal crossing of  smooth hypersurfaces $Y_1, Y_2$ and $Y_3$ in a smooth cubic hypersurface in $\mbp^5$ with degree three.
$(\mco_{D_1}(3), \mco_{D_2}(3), \mco_{D_2}(3))$ is the collective normal class of $Y$.
  Choose curves $C_i = c_{1i}+ c_{2i}+ \cdots + c_{\alpha i} $ with $c_{li}$ belonging to the linear system $\mco_{D_i}(a_l)$ such that $a_1 + a_2 + \cdots + a_\alpha = 3$, then $\{C_1, C_2, C_3 \}$ is a collective normal curve of $Y$.
Build a normal crossing variety $\widetilde Y$ with respect to $C_i$'s. Then $\widetilde Y$ is a $d$-semistable Calabi--Yau threefold of type III. Note $h^2(Y)=1$ and $\gamma = 9$.
The Hodge number numbers
of $M_{\widetilde Y}$ are as follows:
\begin{center}
  \begin{tabular}{ |l | l| }
    \hline
    $( a_1, a_2,  \cdots, a_\alpha )$ & $(h^{1,1}, h^{1,2})$ \\ \hline \hline
  (1, 1, 1)& (5,50)\\ \hline
    (1, 2)& (3,57)\\ \hline
    (3)&  $(1,73)^*$\\ \hline
  \end{tabular}
\end{center}

\end{example}

\begin{example}

Let $Y$ be  a normal crossing of  smooth hypersurfaces $Y_1, Y_2$ and $Y_3$ of degree one in a smooth complete intersections two quadrics in $\mbp^6$.
$(\mco_{D_1}(3), \mco_{D_2}(3), \mco_{D_2}(3))$ is the collective normal class of $Y$.
  Choose curves $C_i = c_{1i}+ c_{2i}+ \cdots + c_{\alpha i} $ with $c_{li}$ belonging to the linear system $\mco_{D_i}(a_l)$ such that  $a_1 + a_2 + \cdots + a_\alpha = 3$ and build a normal crossing variety $\widetilde Y$ with respect to $C_i$'s. Then $\widetilde Y$ is a $d$-semistable Calabi--Yau threefold of type III.
  Note $h^2(Y)=1$ and $\gamma = 12$.
  The Hodge number numbers
of $M_{\widetilde Y}$ are as follows:
\begin{center}
  \begin{tabular}{ |l | l| }
    \hline
    $( a_1, a_2,  \cdots, a_\alpha )$ & $(h^{1,1}, h^{1,2})$ \\ \hline \hline
  (1, 1, 1)& (5,41)\\ \hline
    (1, 2)& (3,51)\\ \hline
    (3)&  (1,73)\\ \hline
  \end{tabular}
\end{center}

\end{example}

\begin{example}

Let $Y$ be  a normal crossing of  smooth hypersurfaces $Y_1, Y_2$ and $Y_3$ of degree one in a section of the Grassmannian $\Gr(2,5)$ embedded by Pl\"ucker by a subspace of codimension two.
$(\mco_{D_1}(3), \mco_{D_2}(3), \mco_{D_2}(3))$ is the collective normal class of $Y$.
  Choose curves $C_i = c_{1i}+ c_{2i}+ \cdots + c_{\alpha i} $ with $c_{li}$ belonging to the linear system $\mco_{D_i}(a_l)$ such that $a_1 + a_2 + \cdots + a_\alpha = 3$ and build a normal crossing variety $\widetilde Y$ with respect to $C_i$'s. Then $\widetilde Y$ is projective and $d$-semistable and so it is smoothable to a Calabi--Yau threefold $M_{\widetilde Y}$. Note $h^2(Y)=1$ and $\gamma = 15$.
  The Hodge number numbers
of $M_{\widetilde Y}$ are as follows:
\begin{center}
  \begin{tabular}{ |l | l| }
    \hline
    $( a_1, a_2,  \cdots, a_\alpha )$ & $(h^{1,1}, h^{1,2})$ \\ \hline \hline
  (1, 1, 1)& (5,35)\\ \hline
    (1, 2)& $(3,48)^*$\\ \hline
    (3)&  $(1,76)^*$\\ \hline
  \end{tabular}
\end{center}
\end{example}

\begin{example}

Let $Y$ be  a normal crossing of  smooth hypersurfaces $Y_1, Y_2$ and $Y_3$ in $\mbp^2 \times \mbp^2$ with bi-degree $(1,1)$.
$(\mco_{D_1}(3,3), \mco_{D_2}(3,3), \mco_{D_2}(3,3))$ is the collective normal class of $Y$.
  One can choose curves $C_i = c_{1i}+ c_{2i}+ \cdots + c_{\alpha i} $ with $c_{li}$ belonging to the linear system $\mco_{D_i}(a_l, b_l)$ such that
$C_1, C_2, C_3$ satisfies Condition \ref{ccond}, Condition \ref{cond2} and the conditions in Lemma \ref{lem2}.
If $a_1 + a_2 + \cdots + a_\alpha = 3$ and $b_1 + b_2 + \cdots + b_\alpha = 3$, then $\{C_1, C_2, C_3 \}$ is a collective normal curve of $Y$.
As described, we build a normal crossing variety $\widetilde Y$ with respect to $C_i$'s. Then $\widetilde Y$ is projective and $d$-semistable and so it is smoothable to a Calabi--Yau threefold $M_{\widetilde Y}$. Note $h^2(Y)=2$ and $\gamma = 18$.
Note
$$e(Y_i) = 6, e(D_i) = 6.$$
So $\sum_i e(Y_i) -2 \sum_i e(D_i) -2 \gamma = 18 - 4 \cdot 18 = -54.$
We have $h^{1,1}(M_{\widetilde Y}) = 2 \alpha$,
\begin{align*}
e(M_{\widetilde Y}) & = -54+ \sum_l 3 ( 3 a_l - a_l^2 + 3 b_l - b_l^2 - 4 a_l b_l ) \\
                    &= -  3\sum_l ( a_l^2 + b_l^2 + 4 a_l b_l )
\end{align*}
and so
$$h^{1,2}(M_{\widetilde Y}) = h^{1,1}(M_{\widetilde Y}) - \frac{1}{2}e(M_{\widetilde Y}) = \frac{1}{2}(4 \alpha +  6 \sum_l a_l^2 - 14).$$

The Hodge number numbers
of $M_{\widetilde Y}$ are as follows:
\begin{center}
$  \begin{array}{ |l | l| }
    \hline
   ( (a_1, b_1), (a_2, b_2),  \cdots , (a_\alpha, b_\alpha ) )& (h^{1,1}, h^{1,2}) \\ \hline \hline
   \left((3, 3)\right)&(2,83)\\ \hline
\left((1, 0), (2, 3)\right)&(4,61)\\ \hline
\left((1, 3), (2, 0)\right)&(4,43)^*\\ \hline
\left((0, 3),(3, 0)\right)&(4,31)^*\\ \hline
\left((0,   2), (3, 1)\right)&(4,43)^*\\ \hline
\left((1, 2), (2, 1)\right)&(4,43)^*\\ \hline
\left((1, 1), (2, 2)\right)&(4,49)\\ \hline
\left((0, 1), (3, 2)\right)&(4,61)\\ \hline
\left((1, 0), (1, 0), (1, 3)\right)&(6,42)\\ \hline
\left((0, 3), (1, 0), (2, 0)\right)&(6,27)\\ \hline
\left((0,  2), (1, 0), (2, 1)\right)&(6,33)\\ \hline
\left((1, 0), (1, 1), (1, 2)\right)&(6,36)\\ \hline
\left((0, 1), (1, 0), (2,2)\right)&(6,45)\\ \hline
\left((0, 2), (1, 1), (2,0)\right)&(6,27)\\ \hline
\left((0, 1), (1, 2), (2, 0)\right)&(6,33)\\ \hline
\left((0,1), (0, 2), (3, 0)\right)&(6,27)\\ \hline
\left((0, 1), (0, 1), (3, 1)\right)&(6,42)\\ \hline
\left((0, 1), (1, 1), (2, 1)\right)&(6,36)\\ \hline
\left((1, 1), (1, 1), (1, 1)\right)&(6,33)\\ \hline
\left((0, 3), (1, 0), (1, 0), (1, 0)\right)&(8,26)\\ \hline
\left((0, 2), (1, 0), (1, 0), (1, 1)\right)&(8,26)\\ \hline
\left((0, 1), (1, 0), (1, 0), (1, 2)\right)&(8,32)\\ \hline
\left((0, 1), (0, 2), (1, 0), (2, 0)\right)&(8,23)^*\\ \hline
\left((0, 1), (0, 1), (1,0), (2, 1)\right)&(8,32)\\ \hline
\left((0, 1), (1, 0), (1, 1), (1, 1)\right)&(8,29)\\ \hline
\left((0, 1), (0, 1), (1, 1), (2, 0)\right)&(8,26)\\ \hline
\left((0, 1), (0, 1), (0, 1), (3, 0)\right)& (8,26)\\ \hline
\left((0, 1), (0, 2), (1, 0), (1, 0), (1, 0)\right)&(10,22)\\ \hline
\left((0, 1), (0, 1), (1, 0), (1, 0), (1, 1)\right)&(10,25)\\ \hline
\left((0, 1), (0, 1), (0, 1), (1, 0), (2, 0)\right)&(10,22)\\ \hline
\left((0, 1), (0, 1), (0,  1), (1, 0), (1, 0), (1, 0)\right)&(12,21)\\ \hline
  \end{array}
 $
\end{center}

\end{example}

In general, let $Z$ be a  projective Gorenstein variety  of dimension four. Let $Y=Y_1\cup Y_2 \cup Y_3$ be normal crossing hypersurface in $Z$ such that $Y_1 + Y_2 + Y_3 \sim -K_Z$.
Then $Y$ is projective and have trivial dualizing sheaf. We assume that $Y$ satisfies the homological condition (3) in Theorem \ref{kana}.
Suppose that one can choose collective normal curves $C_i = c_{1i}+ c_{2i}+ \cdots + c_{\alpha i}$ satisfying Condition \ref{ccond}, Condition \ref{cond2} and the conditions in Lemma \ref{lem2}.
Then we can build a normal crossing variety $\widetilde Y$ with respect to $C_i$'s, which is a $d$-semistable Calabi--Yau threefold of type III.
 
Now let us consider some examples of $d$-semistable Calabi--Yau threefolds of type III that consists of more than three components.
Let $Y=Y_1 \cup Y_2 \cup Y_3$ be any $d$-semistable Calabi--Yau threefold of type III  --- we already constructed several of such examples.
Then there is a semistable degeneration $\mcX \ra \Delta$  of Calabi--Yau threefolds whose central fiber $\mcX_0$ is $Y$. Let $\tau$ be the triple curve in $Y$ and $\mcX' \ra \mcX$ be the blow-up along $\tau$. Now our new degeneration $\mcX' \ra \Delta$ is not semistable because the central fiber is not reduced. However a base extension $\Delta' \ra \Delta $ by $t \mapsto t^3$ gives us a semistable degeneration $\mcX' \times_\Delta \Delta' \ra \Delta'$ ( Mumford's semistable reduction). Now the central fiber is a normal crossing variety
$$Y'=Y'_1 \cup Y'_2 \cup Y'_3 \cup F,$$
where $F$ is the exceptional devisor of the blow-up.
Note that $F$ is a $\mbp^1$-bundle over $\tau.$
The dual complex of $Y'$ has three triangles as its maximal cells and so $Y'$ is $d$-semistable Calabi--Yau threefolds of type III.
By doing base changes after blowing up the triple loci of the central fiber, one can add as many  components to central fiber as one wants.
All the newly added components are $\mbp^1$-bundles over elliptic curves.
Note a $\mbp^1$-bundle over elliptic curve does not satisfy the cohomological condition (3) in Theorem \ref{kana} ---($h^1(F, \mco_F) \neq 0$).
So those $d$-semistable Calabi--Yau threefolds do not fit into the situation in Theorem \ref{kana}. This may imply that Theorem \ref{kana} needs some generalization.

The author is very thankful to the  referee for making several valuable suggestions for the initial draft of this note.
This work was supported by  Basic Science Research Program
through the National Research Foundation of Korea(NRF) funded by the Ministry of Education (NRF-2017R1D1A2B03029525) and  Hongik University Research Fund.


\end{document}